\providecommand{\U}[1]{\protect\rule{.1in}{.1in}}
\newtheorem{theorem}{Theorem}[section]
\newtheorem{proposition}[theorem]{Proposition}
\newtheorem{example}[theorem]{Example}
\newtheorem{lemma}[theorem]{Lemma}
\newtheorem{final remark}[theorem]{Final Remark}
\newtheorem{definition}[theorem]{Definition}
\begin{document}

\title{Absolutely summing multilinear operators on $\ell_p$ spaces}
\date{}
\author{Oscar Blasco\thanks{Supported by Ministerio de Ciencia e Innovaci\'{o}n
MTM2011-23164}\,, Geraldo Botelho\thanks{Supported by CNPq Grant
302177/2011-6 and Fapemig Grant PPM-00326-13.}\,, Daniel Pellegrino\thanks{Supported by
INCT-Matem\'atica, PROCAD-NF-Capes, CNPq Grants 620108/2008-8 (Ed. Casadinho)
and 301237/2009-3.}~ and Pilar Rueda\thanks{Supported by Ministerio de Ciencia
e Innovaci\'{o}n  MTM2011-22417.\hfill\newline2010 Mathematics Subject
Classification. Primary 46G25; Secondary 47B10, 47L22. \newline Keywords: absolutely
summing, almost summing, multilinear mapping, Banach spaces.}}
\maketitle

\begin{abstract}
We prove new summability properties for multilinear operators on $\ell_p$ spaces. An important tool for this task is a better understanding of the interplay between almost summing and absolutely summing multilinear operators.
\end{abstract}

\section{Introduction}

This work is the outgrowth of several papers by the authors and other researchers (see, e.g., \cite{BBPR, nosso, Archiv, Junek-preprint, michels, Port, Indag, ursula, sevilla, Jarchow, Junek, advances, pop} and references therein) on summability properties of multilinear mappings between Banach spaces.

As a consequence of the successful theory of absolutely summing linear operators, the study of summability properties of multilinear mappings focuses on mappings that improve the summability of series. Historical accounts of this multilinear theory can be found in \cite{BBPR, advances}.

As expected, at the heart of the theory lies the investigation of the interplay between different types of summability of multilinear mappings. For example, \cite{davidarchiv} is a comparative study of three different classes of absolutely summing multilinear mappings.  Having in mind that the interplay between almost summability and absolute summability is quite profitable in the linear case (see \cite[Chapter 12]{Diestel}), in this paper we will explore the connections of almost summing and absolutely summing multilinear mappings and apply them to obtain summability properties for multilinear operators on $\ell_p$ spaces.

The first natural issue is the well know fact that absolutely $p$-summing linear operators are almost summing for every $p$. In some situations, for instance in the Pietsch Domination Theorem, the multilinear mappings that play the role of the absolutely summing linear operators are the dominated ones. So it is a natural question if dominated multilinear mappings are almost summing. In Section 3 we prove that dominated multilinear mappings  actually satisfy a condition stronger than being almost summing.

One of the trends of the theory is the identification of coincidence situations, that is, a situation where $E_1, \ldots, E_n,F$ are Banach spaces and every continuous $n$-linear mapping $A \colon E_1 \times \cdots \times E_n \longrightarrow F$ enjoys a certain summability property. Probably the first result in this line is the Defant--Voigt Theorem (see \cite[Corollary 3.2]{Port} or \cite{BBPR}, where an improved version can be found), which states that multilinear forms are always $(1;1, \ldots,1)$-summing. Several other multilinear coincidence results can be found in, e.g., \cite[Theorem 3.3]{bote97}, \cite[Proposition 2.1]{pams} and \cite[Theorem~3.7]{BBPR}. The core of Section 4 is the obtainment of new coincidence results for absolutely summing multilinear functionals on $\ell_p$ spaces. According to the main idea of this paper, these new absolutely summing coincidence results will follow from a general result (Theorem \ref{as}) concerning almost summing multilinear mappings. We prove for instance that for any
$1\leq p \leq2$, continuous bilinear forms $A$ on $\ell
_{p}\times F$, where $F'$ has type 2, are
absolutely $(p;2,1)$-summing. Moreover,  $A$ is
absolutely $(r_{p};r_{p},r_{p})$-summing for any $1\leq r_{p}\leq\frac
{2p}{3p-2}$. In particular, if $1\leq p\leq2$ and $1<q\leq2$, then bilinear
forms on $\ell_{p}\times\ell_{q}$ are $(p;2,1)-$summing and bilinear forms on $\ell_{1}\times\ell_{q}$ are $(r;r,r)-$summing for
$1\leq r\leq2.$

\section{Notation and background}

\label{not}

All Banach spaces are considered over the scalar field $\mathbb{K}=\mathbb{R}$
or $\mathbb{C}$. Given a Banach space $E$, let $B_{E}$ denote the closed unit
ball of $E$ and $E^{\prime}$ its topological dual.

Let $p > 0$. By $\ell_{p}(E)$ we denote the ($p$-)Banach space of all absolutely
$p$-summable sequences $(x_{j})_{j=1}^{\infty}$ in $E$ endowed with its usual
$\ell_{p}$-norm ($p$-norm if $0 < p < 1$).
Let $\ell_{p}^{w}(E)$ be the space of those sequences $(x_{j})_{j=1}^{\infty}$
in $E$ such that $(\varphi(x_{j}))_{j=1}^{\infty}\in\ell_{p}$ for every
$\varphi\in E^{\prime}$ endowed with the norm ($p$-norm if $0 < p < 1$)
\[
\|(x_{j})_{j=1}^{\infty}\|_{\ell_{p}^{w}(E)}=\sup_{\varphi\in B_{E^{\prime}}%
}\left(\sum_{j=1}^{\infty}|\varphi(x_{j})|^{p} \right)^{\frac1p}.
\]
Let $\ell_{p}^{u}(E)$ denote the closed subspace of $\ell_{p}^{w}(E)$ formed
by the sequences $(x_{j})_{j=1}^{\infty}\in\ell_{p}^{w}(E)$ such that
$\lim_{k\rightarrow\infty}\Vert(x_{j})_{j=k}^{\infty}\Vert_{\ell_{p}^{w}%
(E)}=0.$

Let $E_{1},\ldots,E_{n},E,F$ be Banach spaces. The Banach space of all
continuous $n$-linear mappings from $E_{1}\times\cdots\times E_{n}$ into $F$
is denoted by $\mathcal{L}(E_{1},\ldots,E_{n};F)$ and endowed with the usual sup
norm. We simply write $\mathcal{L}(^{n}E;F)$ when $E_{1}=\cdots=E_{n}=E$.

For $0< p,p_{1},p_{2},\ldots,p_{n}\leq\infty$ , we assume that $\frac{1}%
{p}\leq\frac{1}{p_{1}}+\cdots+\frac{1}{p_{n}}$. An $n-$linear mapping
$A\in{\mathcal{L}}(E_{1},\ldots,E_{n};F)$ is \textit{absolutely $(p;p_{1}%
,p_{2},\ldots,p_{n})$-summing} if there is $C>0$ such that
\[
\Vert(A(x_{j}^{1},x_{j}^{2},\ldots,x_{j}^{n}))_{j=1}^k\Vert_{p}\leq C\prod
_{i=1}^{n}\Vert(x_{j}^{i})_{j=1}^k\Vert_{\ell_{p_{i}}^{w}(E_{i})}%
\]
for all finite families of vectors $x_{1}^{i}, \ldots, x_k^i  \in E_{i}$,  $i=1,2,\ldots,n$.
The infimum of such $C>0$ is called the $(p;p_{1},\ldots,p_{n} )$-summing norm
of $A$ and is denoted by $\pi_{(p;p_{1},\ldots,p_{n})}(A)$. Let $\Pi_{(p;p_{1}
,p_{2},\ldots,p_{n})}(E_{1},\ldots,E_{n};F)$ denote the space of all
absolutely $(p;p_{1},p_{2},\ldots,p_{n})$-summing $n$-linear mappings from
$E_{1}\times\cdots\times E_{n}$ to $F$ endowed with the norm $\pi
_{(p;p_{1}\ldots,p_{n})}$.

It is well known that we can replace $\ell_{p_{k}}^{w}(E_{k})$ with $\ell
_{p_{k}}^{u}(E_{k})$ in the definition of absolutely summing mappings.

Absolutely summing mappings fulfill the following inclusion result, which
appears in \cite[Proposition 3.3]{Thesis} (see also \cite{BBPR}):

\begin{theorem}\label{inclusiontheorem}{\rm (Inclusion Theorem)} \ Let $0< q\leq
p\leq\infty$, $0< q_{j}\leq p_{j}\leq\infty$ for all $j=1,\ldots,n$. If
$\frac{1}{q_{1}}+\cdots+\frac{1}{q_{n}}-\frac{1}{q}\leq\frac{1}{p_{1}}+
\cdots+\frac{1}{p_{n}}-\frac{1}{p}$, then
\[
\Pi_{(q;q_{1},\ldots,q_{n})}(E_{1},\ldots,E_{n};F)\subseteq\Pi_{(p;p_{1}%
,\ldots,p_{n})}(E_{1},\ldots,E_{n};F)
\]
and $\pi_{(p;p_{1},\ldots,p_{n})}\leq\pi_{(q;q_{1} ,\ldots,q_{n})}$ for all Banach spaces $E_1, \ldots, E_n,F$.
\end{theorem}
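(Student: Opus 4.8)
The plan is to prove the slightly stronger quantitative statement that $\pi_{(p;p_1,\ldots,p_n)}(A)\leq\pi_{(q;q_1,\ldots,q_n)}(A)$ for every $A\in\Pi_{(q;q_1,\ldots,q_n)}(E_1,\ldots,E_n;F)$, from which the inclusion of spaces is immediate. The device I would use is a reweighting (absorption) argument: given finite families $(x^i_j)_{j=1}^k\subseteq E_i$, I would feed into the $(q;q_1,\ldots,q_n)$-summing inequality not the vectors $x^i_j$ themselves but the vectors $\lambda_j^{\alpha_i}x^i_j$, where the scalars $\lambda_j\geq0$ and the exponents $\alpha_i\geq0$ are fixed a posteriori so that, after a Hölder estimate, the common quantity $\sum_j\|A(x^1_j,\ldots,x^n_j)\|^p$ cancels from both sides.

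Two preliminary ingredients. First, I would record a Hölder lemma for reweighting weak norms: writing $\frac1{r_i}:=\frac1{q_i}-\frac1{p_i}\geq0$, for every finite family $(x^i_j)_j$ and every $\alpha_i\geq0$ one has
\[
\bigl\|(\lambda_j^{\alpha_i}x^i_j)_j\bigr\|_{\ell_{q_i}^w(E_i)}\leq\Bigl(\sum_j\lambda_j^{\alpha_i r_i}\Bigr)^{1/r_i}\bigl\|(x^i_j)_j\bigr\|_{\ell_{p_i}^w(E_i)},
\]
which follows by applying Hölder's inequality with conjugate exponents $p_i/q_i\geq1$ and $(p_i/q_i)'$ inside the supremum defining $\|\cdot\|_{\ell_{q_i}^w}$, the bookkeeping identity being $q_i\,(p_i/q_i)'=r_i$. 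Second, I would reduce to the case of equality $\sum_i\frac1{r_i}=\frac1q-\frac1p$: if the hypothesis holds strictly, I would replace $q_1$ by $\tilde q_1$ defined by $\frac1{\tilde q_1}=\frac1{q_1}+\bigl[(\frac1q-\frac1p)-\sum_i\frac1{r_i}\bigr]$, so that $0<\tilde q_1\leq q_1\leq p_1$, the gap becomes an equality, and $\Pi_{(q;q_1,q_2,\ldots,q_n)}\subseteq\Pi_{(q;\tilde q_1,q_2,\ldots,q_n)}$ with no increase of the summing norm, simply because $\tilde q_1\leq q_1$ forces $\|\cdot\|_{\ell_{q_1}^w}\leq\|\cdot\|_{\ell_{\tilde q_1}^w}$.

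With these in hand, assume $\sum_i\frac1{r_i}=\frac1q-\frac1p$ and, say, $p<\infty$ (the case $p=\infty$ is simpler and would be disposed of by a single-term application of the hypothesis, and indices equal to $\infty$ carry the usual conventions). I would set $a_j:=\|A(x^1_j,\ldots,x^n_j)\|$, discard the trivial all-zero case, put $S:=\sum_{j=1}^k a_j^{\,p}\in(0,\infty)$, and choose $\lambda_j:=a_j$ together with $\alpha_i:=p/r_i$. Then $\sum_i\alpha_i=p\sum_i\frac1{r_i}=\tfrac pq-1$, so $A(\lambda_j^{\alpha_1}x^1_j,\ldots,\lambda_j^{\alpha_n}x^n_j)=a_j^{\,\sum_i\alpha_i}A(x^1_j,\ldots,x^n_j)$ and $\sum_j a_j^{\,q(\sum_i\alpha_i)+q}=\sum_j a_j^{\,p}=S$. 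Applying the $(q;q_1,\ldots,q_n)$-summing inequality to $(\lambda_j^{\alpha_i}x^i_j)_j$ and then the Hölder lemma above gives
\[
S^{1/q}\leq\pi_{(q;q_1,\ldots,q_n)}(A)\prod_{i=1}^n\Bigl(\sum_{j=1}^k a_j^{\,\alpha_i r_i}\Bigr)^{1/r_i}\prod_{i=1}^n\bigl\|(x^i_j)_j\bigr\|_{\ell_{p_i}^w(E_i)}.
\]
Since $\alpha_i r_i=p$ for every $i$ and $\sum_i\frac1{r_i}=\frac1q-\frac1p$, the middle product equals $S^{1/q-1/p}$; dividing it out (legitimate because $0<S<\infty$) leaves $S^{1/p}\leq\pi_{(q;q_1,\ldots,q_n)}(A)\prod_i\|(x^i_j)_j\|_{\ell_{p_i}^w(E_i)}$, which is exactly the $(p;p_1,\ldots,p_n)$-summing inequality for this family with constant $\pi_{(q;q_1,\ldots,q_n)}(A)$. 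Taking the supremum over all finite families then completes the argument.

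I expect the only genuinely non-routine point to be spotting the correct weighting: realizing that $\lambda_j$ should be (a power of) $\|A(x^1_j,\ldots,x^n_j)\|$ itself, distributed over the $n$ slots with exponents proportional to $1/r_i$, so that the hypothesis manufactures precisely $S^{1/q}$ on the left and $S^{1/q-1/p}$ on the right with $\frac1q-(\frac1q-\frac1p)=\frac1p$. Everything else — the Hölder exponent identity $q_i(p_i/q_i)'=r_i$, the reduction from a strict gap to equality, the conventions for infinite indices, and the trivial $p=\infty$ case — is routine bookkeeping.
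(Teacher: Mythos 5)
The paper does not prove this statement; it is quoted as background with a citation to \cite[Proposition 3.3]{Thesis} (see also \cite{BBPR}), and your reweighting/absorption argument --- feeding $\lambda_j^{\alpha_i}x_j^i$ with $\lambda_j=\|A(x_j^1,\ldots,x_j^n)\|$ and $\alpha_i=p/r_i$ into the $(q;q_1,\ldots,q_n)$-inequality, applying H\"older inside the weak norms via $q_i(p_i/q_i)'=r_i$, and cancelling $S^{1/q-1/p}$ --- is precisely the standard proof given in those sources (the multilinear analogue of the linear inclusion theorem in \cite{Diestel}). Your computations, the reduction to the equality case $\sum_i 1/r_i=1/q-1/p$, and the handling of the degenerate cases ($S=0$, $p=\infty$, $q_i=p_i$) are all correct, so the proposal is a complete and correct proof of the stated result with the stated norm estimate.
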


\medskip If $\frac{1}{p} = \frac{1}{p_1} + \cdots + \frac{1}{p_n}$, absolutely $(p;p_1, \ldots, p_n)$-summing $n$-linear mappings
are usually called \textit{$(p_1, \ldots, p_n)$-dominated}. They satisfy the following
factorization result (see \cite[Theorem 13]{Pie}):

\begin{theorem}\label{factorizationtheorem}{\rm (Factorization Theorem)} A multilinear mapping $A \in {\cal L}(E_{1},\ldots,E_{n};F)$ is $(p_1, \ldots, p_n)$-dominated if and only if there are Banach
spaces $G_{1}, \ldots, G_{n}$, operators $u_{j} \in\Pi_{p_j}(E_{j};G_{j})$, $j = 1, \ldots, n$, and
$B \in\mathcal{L}(G_{1},\ldots,G_{n};F)$ such that $\label{fact}A = B \circ(u_{1}, \ldots, u_{n})$.
\end{theorem}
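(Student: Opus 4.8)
The plan is to prove the two implications separately: the converse is a routine application of Hölder's inequality together with the ideal behavior of absolutely summing operators, while the forward implication rests on the Pietsch Domination Theorem for dominated multilinear mappings.

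For the implication $(\Leftarrow)$, assume $A=B\circ(u_1,\ldots,u_n)$ with $u_i\in\Pi_{p_i}(E_i;G_i)$ and $B\in\mathcal{L}(G_1,\ldots,G_n;F)$. Given finite families $(x^i_j)_{j=1}^k$ in $E_i$, $i=1,\ldots,n$, write $A(x^1_j,\ldots,x^n_j)=B(u_1(x^1_j),\ldots,u_n(x^n_j))$, estimate $\|B(\,\cdot\,)\|\le\|B\|\prod_i\|u_i(x^i_j)\|$, and apply the generalized Hölder inequality with the exponents $p_i/p$ (whose reciprocals sum to $1$ precisely because $\frac1p=\frac1{p_1}+\cdots+\frac1{p_n}$) to obtain
\[
\Big\|\big(A(x^1_j,\ldots,x^n_j)\big)_{j=1}^k\Big\|_p\le\|B\|\prod_{i=1}^n\Big(\sum_{j=1}^k\|u_i(x^i_j)\|^{p_i}\Big)^{1/p_i}.
\]
Bounding each factor by $\pi_{p_i}(u_i)\,\|(x^i_j)_{j=1}^k\|_{\ell_{p_i}^w(E_i)}$ shows that $A$ is $(p;p_1,\ldots,p_n)$-summing, i.e. $(p_1,\ldots,p_n)$-dominated, with $\pi_{(p;p_1,\ldots,p_n)}(A)\le\|B\|\prod_{i=1}^n\pi_{p_i}(u_i)$.

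For the implication $(\Rightarrow)$, suppose $A$ is $(p_1,\ldots,p_n)$-dominated. By the Pietsch Domination Theorem there are regular Borel probability measures $\mu_i$ on $(B_{E_i'},w^*)$ and $C>0$ with $\|A(x^1,\ldots,x^n)\|\le C\prod_{i=1}^n\big(\int_{B_{E_i'}}|\varphi(x^i)|^{p_i}\,d\mu_i(\varphi)\big)^{1/p_i}$ for all $x^i\in E_i$. For each $i$ let $J_i\colon E_i\to L_{p_i}(\mu_i)$ be the canonical map $J_i(x)(\varphi)=\varphi(x)$, put $G_i:=\overline{J_i(E_i)}$ (a closed subspace of the Banach space $L_{p_i}(\mu_i)$), and let $u_i\colon E_i\to G_i$ be $J_i$ with codomain $G_i$. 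The estimate $\sum_j\|u_i(x_j)\|_{G_i}^{p_i}=\int\sum_j|\varphi(x_j)|^{p_i}\,d\mu_i\le\|(x_j)_j\|_{\ell_{p_i}^w(E_i)}^{p_i}$, valid since $\mu_i$ is a probability measure, shows $u_i\in\Pi_{p_i}(E_i;G_i)$ with $\pi_{p_i}(u_i)\le1$. It remains to produce $B$. Define it on $u_1(E_1)\times\cdots\times u_n(E_n)$ by $B(u_1(x^1),\ldots,u_n(x^n)):=A(x^1,\ldots,x^n)$; this is well defined because if $u_i(x^i)=u_i(y^i)$ for all $i$, the telescoping identity $A(x^1,\ldots,x^n)-A(y^1,\ldots,y^n)=\sum_{i=1}^n A(y^1,\ldots,y^{i-1},x^i-y^i,x^{i+1},\ldots,x^n)$ together with the domination inequality (each term is bounded by a constant times $\|u_i(x^i-y^i)\|_{G_i}=0$) forces equality. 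This $B$ is $n$-linear, and $\|B(u_1(x^1),\ldots,u_n(x^n))\|=\|A(x^1,\ldots,x^n)\|\le C\prod_i\|u_i(x^i)\|_{G_i}$, so, since each $u_i(E_i)$ is dense in $G_i$, $B$ extends by the standard density argument for multilinear maps to a continuous $B\in\mathcal{L}(G_1,\ldots,G_n;F)$ with $\|B\|\le C$, and by construction $A=B\circ(u_1,\ldots,u_n)$.

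The main obstacle is the forward implication, and within it two points deserve care: the use of the multilinear Pietsch Domination Theorem (this is exactly where the hypothesis $\frac1p=\frac1{p_1}+\cdots+\frac1{p_n}$ enters, both here and in the Hölder step above), and the well-definedness of $B$ — it is the domination inequality, not mere continuity of $A$, that makes this work.
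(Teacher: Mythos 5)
Your proof is correct: the converse via H\"older with exponents $p_i/p$ and the forward direction via the multilinear Pietsch Domination Theorem, the canonical maps into $L_{p_i}(\mu_i)$, and the telescoping argument for well-definedness of $B$ are all sound. The paper itself gives no proof (it cites this as Theorem 13 of Pietsch's 1984 article), and your argument is essentially the standard one found there.
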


As mentioned in the introduction, the next theorem is probably the first coincidence result for multilinear mappings:

\begin{theorem}{\rm (Defant--Voigt Theorem)} Let $n \geq 2$ and $E_1, \ldots, E_n$ be Banach spaces. Then ${\cal L}(E_1, \ldots, E_n;\mathbb{K}) = \Pi_{(1;1,\ldots,1)}(E_1, \ldots, E_n;\mathbb{K})$ isometrically.
\end{theorem}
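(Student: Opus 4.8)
The plan is to prove the two norm inequalities $\|A\|\le\pi_{(1;1,\ldots,1)}(A)$ and $\pi_{(1;1,\ldots,1)}(A)\le\|A\|$ for an arbitrary $A\in\mathcal L(E_1,\ldots,E_n;\mathbb K)$; since an absolutely summing multilinear mapping is continuous by definition, the inclusion $\Pi_{(1;1,\ldots,1)}(E_1,\ldots,E_n;\mathbb K)\subseteq\mathcal L(E_1,\ldots,E_n;\mathbb K)$ always holds, so the two inequalities together give both the set equality and the isometry. The first one is immediate: feeding one-term families into the defining inequality of an absolutely $(1;1,\ldots,1)$-summing mapping and using that $\|(x^i)\|_{\ell_1^w(E_i)}=\|x^i\|$ for a single vector yields $\|A(x^1,\ldots,x^n)\|\le\pi_{(1;1,\ldots,1)}(A)\prod_{i=1}^n\|x^i\|$, i.e.\ $\|A\|\le\pi_{(1;1,\ldots,1)}(A)$.

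For the reverse inequality it is enough to verify that, for all finite families $(x^i_j)_{j=1}^k\subseteq E_i$ ($i=1,\ldots,n$),
\[
\sum_{j=1}^k\bigl|A(x^1_j,\ldots,x^n_j)\bigr|\le\|A\|\,\prod_{i=1}^n\bigl\|(x^i_j)_{j=1}^k\bigr\|_{\ell_1^w(E_i)} .
\]
The key difficulty is that, $A$ being merely multilinear, one cannot pull the summation inside $A$ the way one does for linear functionals; I would get around this by a randomization (diagonalization) argument. First I would choose unimodular scalars $\lambda_1,\ldots,\lambda_k$ with $\lambda_jA(x^1_j,\ldots,x^n_j)=|A(x^1_j,\ldots,x^n_j)|$ and absorb them into the first slot: setting $u_j:=\lambda_jx^1_j$ one has $\|(u_j)_{j=1}^k\|_{\ell_1^w(E_1)}=\|(x^1_j)_{j=1}^k\|_{\ell_1^w(E_1)}$, and the left-hand side above becomes the non-negative real number $\sum_{j=1}^kA(u_j,x^2_j,\ldots,x^n_j)$.

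Next I would introduce independent Rademacher sequences $(r^{(1)}_j)_{j=1}^k,\ldots,(r^{(n-1)}_j)_{j=1}^k$ (equivalently, an average over $\{-1,1\}^{k(n-1)}$). Expanding by multilinearity, the term $A(u_{j_1},x^2_{j_2},\ldots,x^n_{j_n})$ carries the random coefficient $\prod_{i=1}^{n-1}\bigl(r^{(i)}_{j_i}r^{(i)}_{j_n}\bigr)$, whose expectation is $\prod_{i=1}^{n-1}\delta_{j_ij_n}$; hence only the diagonal $j_1=\cdots=j_n$ survives, each term with coefficient $1$, and one obtains
\[
\mathbb E\Bigl[A\Bigl(\sum_{j}r^{(1)}_ju_j,\ \sum_{j}r^{(2)}_jx^2_j,\ \ldots,\ \sum_{j}r^{(n-1)}_jx^{n-1}_j,\ \sum_{j}\Bigl(\textstyle\prod_{i=1}^{n-1}r^{(i)}_j\Bigr)x^n_j\Bigr)\Bigr]=\sum_{j=1}^kA(u_j,x^2_j,\ldots,x^n_j).
\]
Every multiplier occurring here has modulus $1$, so since $\|\sum_jc_jy_j\|=\sup_{\varphi\in B_{E'}}|\sum_jc_j\varphi(y_j)|\le\sup_{\varphi\in B_{E'}}\sum_j|\varphi(y_j)|=\|(y_j)_j\|_{\ell_1^w(E)}$ whenever $|c_j|\le1$, the expression inside the expectation on the left is, for each realization of the signs, at most $\|A\|\prod_{i=1}^n\|(x^i_j)_j\|_{\ell_1^w(E_i)}$ (using the first-slot norm identity once more). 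Taking moduli in the displayed identity then gives the required inequality, whence $\pi_{(1;1,\ldots,1)}(A)\le\|A\|$ and the theorem follows.

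I expect the only genuine obstacle to be finding the right randomization scheme — in particular, assigning the extra multiplier $\prod_{i=1}^{n-1}r^{(i)}_j$ to the last slot, which is exactly what forces all indices of a surviving term to coincide and thereby makes the average reproduce $\sum_jA(u_j,x^2_j,\ldots,x^n_j)$ with constant precisely $1$. For $n=2$ this reduces to the classical trick with a single Rademacher sequence placed in both slots; the scheme above works uniformly in $n$ and over both scalar fields. The remaining ingredients — the sign absorption, the elementary estimate for $\|\sum_jc_jy_j\|$, and the bookkeeping of the expectation — are routine.
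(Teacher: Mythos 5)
Your argument is correct. Note, however, that the paper does not prove the Defant--Voigt Theorem at all: it is quoted as background, with the proof deferred to \cite[Corollary 3.2]{Port} and to \cite{BBPR}, so there is no in-paper proof to compare against. Your randomization scheme is sound: the coefficient of $A(u_{j_1},x^2_{j_2},\ldots,x^n_{j_n})$ is indeed $\prod_{i=1}^{n-1}r^{(i)}_{j_i}r^{(i)}_{j_n}$, whose expectation is $\prod_{i=1}^{n-1}\delta_{j_ij_n}$ by independence of the $n-1$ sequences and orthonormality within each, so only the diagonal survives with coefficient exactly $1$; since every multiplier (including $\lambda_jr^{(1)}_j$ in the first slot and $\prod_i r^{(i)}_j$ in the last) is unimodular, the pointwise bound $\|A\|\prod_{i=1}^n\|(x^i_j)_j\|_{\ell_1^w(E_i)}$ holds for each realization of the signs, and the isometry follows from the trivial reverse inequality. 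This is essentially the classical averaging proof of the theorem (the $n=2$ case being the single-Rademacher trick), and it is self-contained, which is more than the paper offers for this statement. One small presentational point: it would be cleaner to say explicitly that you take the modulus of the expectation and bound it by the expectation of the modulus before applying the pointwise estimate, but that is exactly what you indicate and it is routine.
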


We denote by $Rad(E)$ the space of sequences $(x_{j})_{j=1}^{\infty}$ in $E$
such that
\[
\Vert(x_{j})_{j=1}^{\infty}\Vert_{Rad(E)}=\sup_{n\in\mathbb{N}}\left\|\sum
_{j=1}^{n}r_{j}x_{j}\right\|_{L^{2}([0,1],E)}<\infty,
\]
where $(r_{j})_{j\in\mathbb{N}}$ are the Rademacher functions on $[0,1]$
defined by $r_{j}(t)={\rm sign}(\sin2^{j}\pi t)$. If instead of the $L^{2}([0,1],E)$
norm one considers in the definition of $Rad(E)$ the $L^{p}([0,1],E)$ norm,
$1\leq p<\infty$, one gets equivalent norms on $Rad(E)$ as a consequence of
Kahane's inequalities (see \cite[p. 211]{Diestel}). These norms, when needed,
will be denoted by $\|\cdot\|_{Rad_{p}(E)}$.

Recall also that a linear operator $u\colon E\longrightarrow F$ is said to be
\textit{almost summing} if there is a $C>0$ such that%
\[
\Vert\left(  u(x_{j})\right)  _{j=1}^{m}\Vert_{Rad(F)} \leq C\left\Vert
(x_{j})_{j=1} ^{m}\right\Vert _{\ell^{w}_{2}(E)}%
\]
for any finite set of vectors $\{x_{1},\ldots,x_{m}\}$ in $E$. The space of
all almost summing linear operators from $E$ to $F$ is denoted by $\Pi
_{as}(E;F)$ and the infimum of all $C>0$ fulfilling the above inequality is
denoted by $\|u\|_{as}$. Note that this definition differs from the
definition of almost summing operators given in \cite[p. 234]{Diestel} but
coincides with the characterization which appears a few lines after that
definition (yes, the definition and the stated characterization are not
equivalent). Since the proof of \cite[Proposition 12.5]{Diestel} uses the
characterization (which is our definition), we can conclude that every
absolutely $p$-summing linear operator, $1 \leq p < + \infty$, is almost summing.

The concept of almost summing multilinear mapping was considered in
\cite{Nach, Archiv} and reads as follows: A multilinear map $A\in{\mathcal{L}%
}(E_{1},\ldots,E_{n};F)$ is said to be \textit{almost summing} if there exists
$C>0$ such that
\begin{equation}
\label{almostsumming}\| \left(  A(x^{1}_{j},\ldots,x^{n}_{j})\right)  _{j=1}%
^{m}\|_{Rad(F)}\le C\prod_{i=1}^{n} \Vert(x_{j}^{i})_{j=1}^{m}\Vert_{\ell
_{2}^{w}(E_{i})}%
\end{equation}
for any finite set of vectors $(x^{i}_{j})_{j=1}^{m} \subseteq E_{i}$ for
$i=1,\ldots,n$. We write $\Pi_{as}(E_{1},\ldots,E_{n};F)$ for the space of almost
summing multilinear maps, which is
endowed with the norm
\[
\|A\|_{as}:= \inf\{C > 0\ \mbox{such\ that\ (\ref{almostsumming})\ holds} \}.
\]

For $n\ge 1$ and $A\in{\mathcal{L}}(E_{1},\ldots,E_{n};F)$,
$$\widehat {A}\colon \ell_\infty(E_1)\times \cdots \times \ell_\infty(E_n)\longrightarrow \ell_\infty(F)~,~
\widehat{A}((x_{j}^{1})_{j=1}^{\infty},\ldots,(x_{j}^{n})_{j=1}^{\infty
}):=(A(x_{j}^{1},\ldots,x_{j}^{n}))_{j=1}^{\infty},$$
is a bounded $n$-linear mapping. Given subspaces $X_{i}\subseteq \ell_\infty(E_i)$ for $1\le i\le n$ and $Y\subseteq \ell_\infty(F)$, we say that $\widehat{A}\colon X_{1}\times\cdots\times X_{n}\longrightarrow Y$ is bounded -- or, equivalently, $\widehat{A} \in {\cal L}(X_{1}, \ldots, X_{n}; Y)$ --
if the restriction of $\widehat A$ to $X_1 \times \cdots \times X_n$ is a well defined (hence $n$-linear) continuous $Y$-valued mapping.

\section{The interplay between almost summing and absolutely summing multilinear mappings}

In next section we use almost summing multilinear mappings to identify a number of situations where all multilinear functionals on $\ell_p$ spaces are absolutely summing. These coincidence results will be consequences of the next result, which asserts that if a $k$-linear mapping, $1 \leq k < n$, associated to the $n$-linear form $A$ is almost summing, then $A$ is not only absolutely $(1;1, \ldots, 1)$-summing but absolutely $(1;2, \ldots,2,1\ldots 1)$-summing.

\begin{definition}\rm Let $E_1, \ldots, E_n,F$ be Banach spaces and $A \colon E_1 \times \cdots \times E_n \longrightarrow F$ be a continuous $n$-linear mapping. For $1 \leq k < n$, the $k$-linear mapping $A_k$ associated to $A$ is given by
\[
A_k\colon E_1 \times \cdots \times E_k \longrightarrow {\cal L}(E_{k+1},\ldots, E_n;F)~,~A_{k}(x_{1},\ldots,x_{k})(x_{k+1},\ldots,x_{n})=A(x_{1},\ldots,x_{n}).
\]
It is clear that $A_k \in {\cal L}(E_1,\ldots, E_k;{\cal L}(E_{k+1},\ldots, E_n;F))$ and $\|A_k\| = \|A\|$.
\end{definition}

\begin{theorem}
\label{as} Let $1\le k<n$ and $A\in{\mathcal{L}}(E_{1},\ldots,E_{n};\mathbb{K})$
be such that
\[
A_{k}\in\Pi_{as}(E_{1},\ldots,E_{k};{\mathcal{L}}(E_{k+1},\ldots,E_{n}%
;\mathbb{K})).
\]
Then,
\[
\widehat A \colon \ell_{2}^{w}(E_{1})\times\cdots\times\ell_{2}^{w}(E_{k})\times
Rad(E_{k+1}) \times\cdots\times Rad(E_{n}) \longrightarrow\ell_{1},%
\]
is bounded. Moreover $\|\widehat A\|\le\|A_{k}\|_{as}.$

 In particular $A\in \Pi_{(1;2,\ldots 2, 1,\ldots, 1)}(E_{1},\ldots,E_{n};\mathbb{K})$.

\end{theorem}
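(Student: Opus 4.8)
The plan is to prove the displayed norm inequality for finite families, and then read off both the boundedness of $\widehat A$ and the ``in particular'' clause from it. Throughout, fix finite families $(x^i_j)_{j=1}^m$ in $E_i$, $i=1,\ldots,n$.

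\emph{Reduction to a diagonal sum.} First I would linearize by scalars: choose unimodular $\lambda_j$ with $\overline{\lambda_j}\,A(x^1_j,\ldots,x^n_j)=|A(x^1_j,\ldots,x^n_j)|$, absorb them into the first coordinate, and set $\varphi_j:=A_k(\overline{\lambda_j}x^1_j,x^2_j,\ldots,x^k_j)\in\mathcal L(E_{k+1},\ldots,E_n;\mathbb K)$, so that $\sum_{j}|A(x^1_j,\ldots,x^n_j)|=\sum_{j}\varphi_j(x^{k+1}_j,\ldots,x^n_j)\ge 0$. Applying the almost summing inequality for $A_k$ to the families $(\overline{\lambda_j}x^1_j)_j,(x^2_j)_j,\ldots,(x^k_j)_j$, and using $\|(\overline{\lambda_j}x^1_j)_{j=1}^m\|_{\ell_2^w(E_1)}=\|(x^1_j)_{j=1}^m\|_{\ell_2^w(E_1)}$, I get
\[
\big\|(\varphi_j)_{j=1}^m\big\|_{Rad(\mathcal L(E_{k+1},\ldots,E_n;\mathbb K))}\le\|A_k\|_{as}\prod_{i=1}^k\big\|(x^i_j)_{j=1}^m\big\|_{\ell_2^w(E_i)}.
\]

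\emph{The core estimate.} Here I would work on $[0,1]^{n-k}$, placing in the $\ell$-th coordinate ($\ell=k+1,\ldots,n$) an independent copy $(r^{(\ell)}_j)_j$ of the Rademacher sequence, and put $\Phi(\vec t):=\sum_{j=1}^m\big(\prod_{\ell=k+1}^n r^{(\ell)}_j(t_\ell)\big)\varphi_j$. Expanding each multilinear form $\varphi_j$ and using the orthonormality of each Rademacher sequence together with independence across coordinates, all off-diagonal terms vanish and
\[
\int_{[0,1]^{n-k}}\Phi(\vec t)\Big(\sum_i r^{(k+1)}_i(t_{k+1})x^{k+1}_i,\ldots,\sum_i r^{(n)}_i(t_n)x^n_i\Big)\,d\vec t=\sum_{j=1}^m\varphi_j(x^{k+1}_j,\ldots,x^n_j).
\]
Bounding the integrand by $\|\Phi(\vec t)\|\prod_{\ell=k+1}^n\|\sum_i r^{(\ell)}_i(t_\ell)x^\ell_i\|$ and applying Cauchy--Schwarz on $[0,1]^{n-k}$ (the product of these last factors has $L^2$-norm $\prod_{\ell}\|(x^\ell_j)_{j=1}^m\|_{Rad(E_\ell)}$ by Fubini) gives
\[
\sum_{j=1}^m\varphi_j(x^{k+1}_j,\ldots,x^n_j)\le\Big(\int_{[0,1]^{n-k}}\|\Phi(\vec t)\|^2\,d\vec t\Big)^{1/2}\prod_{\ell=k+1}^n\big\|(x^\ell_j)_{j=1}^m\big\|_{Rad(E_\ell)}.
\]
The step I expect to be the real point of the proof is the identity $\big(\int_{[0,1]^{n-k}}\|\Phi(\vec t)\|^2\,d\vec t\big)^{1/2}=\|(\varphi_j)_{j=1}^m\|_{Rad(\mathcal L(E_{k+1},\ldots,E_n;\mathbb K))}$: the signs $\sigma_j(\vec t):=\prod_{\ell=k+1}^n r^{(\ell)}_j(t_\ell)$, $1\le j\le m$, are products of independent symmetric $\pm1$ variables over pairwise disjoint index sets, hence form themselves an independent symmetric $\pm1$ family, so that $\int_{[0,1]^{n-k}}\|\sum_j\sigma_j(\vec t)\varphi_j\|^2\,d\vec t=\int_0^1\|\sum_j r_j(t)\varphi_j\|^2\,dt$. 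Combining the three displays,
\[
\sum_{j=1}^m|A(x^1_j,\ldots,x^n_j)|\le\|A_k\|_{as}\prod_{i=1}^k\big\|(x^i_j)_{j=1}^m\big\|_{\ell_2^w(E_i)}\prod_{\ell=k+1}^n\big\|(x^\ell_j)_{j=1}^m\big\|_{Rad(E_\ell)}.
\]

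\emph{Conclusion.} Applying this to truncations and using that the weak-$\ell_2$ norm and the $Rad$-norm of an initial segment never exceed those of the whole sequence, I conclude that $\widehat A$ maps $\ell_2^w(E_1)\times\cdots\times\ell_2^w(E_k)\times Rad(E_{k+1})\times\cdots\times Rad(E_n)$ into $\ell_1$ with $\|\widehat A\|\le\|A_k\|_{as}$. For the last assertion I would only need the norm-one inclusion $\ell_1^w(E_\ell)\hookrightarrow Rad(E_\ell)$: for each $t$ pick $\psi_t\in B_{E_\ell'}$ with $\|\sum_j r_j(t)x^\ell_j\|=\sum_j r_j(t)\psi_t(x^\ell_j)\le\sum_j|\psi_t(x^\ell_j)|\le\|(x^\ell_j)_j\|_{\ell_1^w(E_\ell)}$, and integrate in $t$; restricting $\widehat A$ accordingly yields $A\in\Pi_{(1;2,\ldots,2,1,\ldots,1)}(E_1,\ldots,E_n;\mathbb K)$. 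Apart from the sign identity above, everything is bookkeeping with Fubini, Rademacher orthogonality and Cauchy--Schwarz.
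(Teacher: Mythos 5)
Your proposal is correct and follows essentially the same route as the paper's proof: extract the diagonal sum $\sum_j|A(x^1_j,\ldots,x^n_j)|$ via orthogonality of Rademacher products over $[0,1]^{n-k}$, estimate by Cauchy--Schwarz together with the almost summing inequality for $A_k$, and finish with the norm-one inclusion $\ell_1^w(E_\ell)\hookrightarrow Rad(E_\ell)$. The only (cosmetic) difference is that you attach the product of Rademacher signs $\prod_{\ell}r_j^{(\ell)}(t_\ell)$ to the operator part $\varphi_j=A_k(\overline{\lambda_j}x^1_j,\ldots,x^k_j)$ and use one global Cauchy--Schwarz plus the equidistribution of $(\sigma_j)_j$ with the Rademacher sequence, whereas the paper attaches that product to the last argument $x^n_j$ and pairs it with $f_\alpha$ in a single variable; both yield $\|\widehat A\|\le\|A_k\|_{as}$.
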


\begin{proof} Let $(x_{j}^{i})_{j=1}^\infty$ be a finite sequence in $E_{i}$,  $i=1,\ldots
,n.$ Take a scalar sequence $(\alpha_{j})_{j=1}^\infty$, let $A^j_k=
A_k(x_j^1,x_j^2,\ldots,x_j^k)$ and define
$$f_\alpha(t_k)=\sum_{j=1}^\infty\alpha_j A^j_kr_j(t_k);~f_i(t_i)=\sum_{j=1}^\infty r_j(t_i)x_j^i,~ i=k+1,\ldots,n-1;~{\rm and}$$
$$f_{n}(t_{k},\ldots,t_{n-1})=\sum_{j=1}^\infty r_j(t_{k})\cdots r_j(t_{n-1})x_j^n, \quad t_{k},\ldots,t_{n-1} \in
[0,1].$$
The orthogonality of the
Rademacher system  shows that
\begin{align*}
&\sum_{j=1}^\infty A(\alpha_jx_j^1,\ldots,x_j^n)=\sum_{j=1}^\infty
A_k(\alpha_jx_j^1,\ldots,x_j^k)(x_j^{k+1},\ldots,x_j^n) =  \sum_{j=1}^\infty \alpha_j
A_k^j(x_j^{k+1},\ldots,x_j^n)
\\&=\int_0^1\cdots\int_0^1
f_\alpha(t_k)( f_{k+1}(t_{k+1}),\ldots, f_{n-1}(t_{n-1}),f_n(t_{k},\ldots,t_{n-1}))dt_k \cdots dt_{n-1}\\
&\le\int_0^1\cdots\int_0^1 \Big(\int_0^1\| f_\alpha(t_k)\|\cdot
\|f_n(t_{k},\ldots,t_{n-1})\|dt_k\Big) \|
f_{k+1}(t_{k+1})\|\cdots \|f_{n-1}(t_{n-1})\|\  dt_{k+1}\cdots dt_{n-1} \\
&\le\|A_k\|_{as}\prod_{i=1}^k\|(x_j^i)_{j=1}^\infty\|_{\ell_2^w(E_i)}\\
&.\int_0^1\cdots\int_0^1 \Big(\int_0^1 \|f_{n}(t_k,\ldots,t_{n-1})\|^2dt_{k}\Big)^{1/2}
\| f_{k+1}(t_{k+1})\|\cdots \|f_{n-1}(t_{n-1})\| dt_{k+1}\cdots dt_{n-1} \\&\le   \|A_k\|_{as}
\prod_{i=1}^k\|(x_j^i)_{j=1}^\infty\|_{\ell_2^w(E_i)}
\Big(\prod_{i=k+1}^{n-1}\|(x_j^i)_{j=1}^\infty\|_{Rad_1(F)}\Big)\|( x_j^n)_{j=1}^\infty\|_{Rad_2(F)}.
\end{align*}
The result follows.
\end{proof}

\label{dominated}

It is well known that $p-$absolutely summing linear operators are almost summing, more precisely (see \cite[Proposition 12.5]{Diestel}): \begin{equation}\label{lc}\bigcup_{p>0}\Pi
_{p}(E;F)\subseteq\Pi_{as}(E;F).\end{equation}

In the multilinear setting, for a Hilbert space $H$, clearly
  $$\Pi_{as}(E_{1},\ldots,E_{n};H)= \Pi_{(2;2,\ldots,2)}(E_{1},\ldots,E_{n};H),$$
  because
$Rad(H)=\ell_{2}(H)$; and the corresponding inclusions hold whenever $F$ has
type $p$ or cotype $q$.

In view of \cite[Theorem 4.1]{Nach}, a multilinear version of (\ref{lc}) asserting that dominated multilinear mappings are almost summing is expected. Next we give a short proof of this fact, but the aim of this section is to go a bit further.

\begin{proposition} Let $M_n=\left\{(p_1,\ldots,p_n,p)\in \mathbb R_+^{n+1}:  \frac{1}{p}=
\sum_{i=1}^n\frac{1}{p_i}\right\}$ and $E_1, \ldots, E_n, F$ be Banach spaces. Then
\[
\bigcup_{(p_1,\ldots,p_n,p)\in M_n}\Pi_{(p;p_1,\ldots,p_n)}(E_{1},\ldots, E_{n};F)\subseteq\Pi
_{as}(E_{1},\ldots, E_{n};F).
\]
\end{proposition}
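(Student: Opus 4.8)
The plan is to reduce the multilinear domination case to the factorization theorem and the known linear inclusion $\bigcup_{p>0}\Pi_p(E;F)\subseteq\Pi_{as}(E;F)$ together with the multilinear Hölder-type estimate for the Rademacher ($Rad$) norm. Suppose $(p_1,\ldots,p_n,p)\in M_n$, so $\frac1p=\sum_{i=1}^n\frac1{p_i}$, and let $A\in\Pi_{(p;p_1,\ldots,p_n)}(E_1,\ldots,E_n;F)$, i.e. $A$ is $(p_1,\ldots,p_n)$-dominated. By the Factorization Theorem (Theorem \ref{factorizationtheorem}) there are Banach spaces $G_1,\ldots,G_n$, operators $u_i\in\Pi_{p_i}(E_i;G_i)$ and $B\in\mathcal{L}(G_1,\ldots,G_n;F)$ with $A=B\circ(u_1,\ldots,u_n)$.

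Next I would use the linear inclusion \eqref{lc}: each $u_i$, being absolutely $p_i$-summing, is almost summing, so there is $C_i>0$ with $\|(u_i(x_j^i))_{j=1}^m\|_{Rad(G_i)}\le C_i\|(x_j^i)_{j=1}^m\|_{\ell_2^w(E_i)}$. The key step is then to control $\|(A(x_j^1,\ldots,x_j^n))_{j=1}^m\|_{Rad(F)}=\|(B(u_1(x_j^1),\ldots,u_n(x_j^n)))_{j=1}^m\|_{Rad(F)}$ by a product of the quantities $\|(u_i(x_j^i))_{j=1}^m\|_{Rad(G_i)}$. Concretely, introduce independent copies of the Rademacher system — that is, work with Rademacher functions $r_j^{(1)},\ldots,r_j^{(n)}$ in $n$ separate variables $t_1,\ldots,t_n$ — and use the multilinearity of $B$ to write, after averaging, an expression of the form $\int_0^1\!\cdots\!\int_0^1 B\big(\sum_j r_j^{(1)}(t_1)u_1(x_j^1),\ldots,\sum_j r_j^{(n)}(t_n)u_n(x_j^n)\big)$ type integrands; then estimate pointwise by $\|B\|\prod_{i=1}^n\|\sum_j r_j^{(i)}(t_i)u_i(x_j^i)\|$ and apply Fubini together with Hölder's inequality in the probability space (using, if desired, Kahane's inequalities to juggle between $Rad_p$ norms). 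The standard device to pass from the single-variable $Rad(F)$ norm on the left to the multi-variable integral is contraction/averaging over signs, exactly as in the proof of Theorem \ref{as}; this yields $\|(A(x_j^1,\ldots,x_j^n))_j\|_{Rad(F)}\le\|B\|\prod_{i=1}^n\|(u_i(x_j^i))_j\|_{Rad(G_i)}\le\|B\|\prod_i C_i\prod_i\|(x_j^i)_j\|_{\ell_2^w(E_i)}$, which is precisely the almost summing inequality \eqref{almostsumming} for $A$.

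The main obstacle I expect is the second step: justifying the inequality $\|(B(y_j^1,\ldots,y_j^n))_j\|_{Rad(F)}\le\|B\|\prod_{i=1}^n\|(y_j^i)_j\|_{Rad(G_i)}$, i.e. that a bounded $n$-linear map composed coordinatewise is ``$Rad$-bounded'' with constant $\|B\|$. This is the multilinear analogue of the scalar fact that $\|\sum_j r_j a_j b_j\|_{L^2}\le\|(a_j)\|_{Rad}\,\|(b_j)\|_{Rad_\infty}$-type bounds, and it needs the trick of replacing the single Rademacher variable on the left by a product of independent ones (the contraction principle lets one insert extra signs for free, and Fubini then factorizes the integral). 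One must be slightly careful that Kahane's inequalities are invoked to reconcile the $L^2$-based $Rad$ norm with whatever $L^p$-based norms appear after Hölder, but these only cost universal constants and do not affect the statement, which asserts mere inclusion of spaces rather than an isometric or norm-one estimate. Once this $Rad$-multilinearity of $B$ is in hand, the proposition follows by chaining the three estimates.
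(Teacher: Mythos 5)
Your proposal is correct and takes essentially the same route as the paper: the paper likewise combines the linear inclusion (\ref{lc}) with the Factorization Theorem, merely inserting one extra step (the Inclusion Theorem, to pass to the equal indices $p_0=\max_i p_i$ so that the diagonal case, i.e.\ \cite[Theorem 4.1]{Nach}, can be quoted directly), whereas you apply the factorization to general exponents and fill in the Rademacher estimate by hand. The only detail to tighten is your displayed integrand: with fully independent Rademacher systems in every slot the integral vanishes, so one slot must carry the product $r_j(t_1)\cdots r_j(t_{n-1})r_j(s)$ (exactly as $f_n$ does in the proof of Theorem \ref{as}), after which Minkowski's integral inequality yields $\|(B(y_j^1,\ldots,y_j^n))_j\|_{Rad(F)}\le\|B\|\prod_{i=1}^n\|(y_j^i)_j\|_{Rad(G_i)}$ as you claim.
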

\begin{proof}
Using (\ref{lc})
and the Factorization Theorem \ref{fact} -- see also \cite[Theorem 4.1]{Nach} -- it is not difficult to see that
\[\bigcup_{p>0}\Pi_{(p/n;p\ldots,p)}(E_{1},\ldots, E_{n};F)\subseteq\Pi
_{as}(E_{1},\ldots, E_{n};F).
\]
Given $(p_1,\ldots,p_n,p)\in M$, using the Inclusion Theorem \ref{inclusiontheorem} and letting $p_0=\max\{p_i, 1\le i\le n\}$, we have
$$\Pi_{(p;p_1,\ldots,p_n)}(E_{1},\ldots, E_{n};F)
\subseteq \Pi_{(\frac{p_0}{n};p_0,\ldots,p_0)}(E_{1},\ldots, E_{n};F),$$
which gives the result.
\end{proof}

We now show that by replacing $Rad(F)$ with a bigger space $Rad^{(2)}(F)$ we get a variation of the above result,  with a kind of multiple summation.

\begin{definition}\rm
For a Banach space $F$, by $Rad^{(2)}(F)$ we denote the space of sequences $(x_{i,j})_{i,j\ge 0}\subseteq  F$ such that
$$\|(x_{i,j})\|_{Rad^{(2)}(F)}=\sup_{m\ge 0} \left(\int_0^1 \left\|\sum_{i,j=0}^m x_{i,j}r_i(t)r_j(t)\right\|^2dt\right)^{1/2}<\infty.$$
\end{definition}

We give an example where the number $\|(x_{i,j})\|_{Rad^{(2)}(F)}$ can be explicitly computed:

\begin{example}\rm Given $d\in \mathbb N$, for $F=\mathbb C^d$ we have
$$\|(x_{i,j})\|_{Rad^{(2)}(F)}= \sup_{m}\left(\sum_{i=0}^m \|x_{i,i}\|^2+2\sum_{i<j}^m Re(\langle x_{i,i},x_{j,j}\rangle)+ \sum_{i<j}^m \|x_{i,j}+x_{j,i}\|^2\right)^{1/2}.$$
Indeed, given $m\ge 0$, for $\langle x,y\rangle =\sum_{j=1}^d x(j)\overline{y(j)}$,
\begin{align*}
\int_0^1 \left\|\sum_{i,j=0}^m x_{i,j}r_i(t)r_j(t)\right\|^2dt&= \int_0^1 \left\|\sum_{i=0}^m x_{i,i}+\sum_{i<j}^m (x_{i,j}+ x_{j,i}) r_i(t)r_j(t)\right\|^2dt\\
&=  \left\langle \sum_{i=0}^m x_{i,i}, \sum_{i'=0}^m x_{i',i'}\right\rangle  \\&+
\int_0^1
\left\langle \sum_{ i<j}^m (x_{i,j}+ x_{j,i})r_i(t)r_j(t),\sum_{i'=0}^m x_{i',i'}\right\rangle   dt \\
&+
\int_0^1 \left\langle\sum_{i'=0}^m x_{i',i'},  \sum_{ i<j}^m (x_{i,j}+ x_{j,i}) r_i(t)r_j(t)\right\rangle dt \\
&+
\int_0^1 \left\langle \sum_{ i<j}^m (x_{i,j}+ x_{j,i}) r_i(t)r_j(t),\sum_{ i'<j'}^m (x_{i',j'}+ x_{j',i'}) r_{i'}(t)r_{j'}(t)\right\rangle  dt.
\end{align*}
The desired formula now follows using that $\int_0^1 r_i(t)r_j(t) r_{i'}(t)r_{j'}(t) dt=0$ unless $i=i'$ and $j=j'$ (see \cite[p.\,10]{Diestel}) and the fact
$\langle x,y\rangle +\langle y,x\rangle = 2Re(\langle x,y\rangle )$.
 \end{example}

\begin{definition}\rm We say that a bilinear map $A:E_1\times E_2\longrightarrow F$ is {\it bilinear almost summing} if  there exists
$C>0$ such that
\begin{equation}
\label{bilinearalmostsumming}\left\| \left(  A(x^{1}_{i},x^{2}_{j})\right)  _{i,j=0}%
^{m}\right\|_{Rad^{(2)}(F)}\le C\prod_{k=1}^{2} \Vert(x_{j}^{k})_{j=0}^{m}\Vert_{\ell
_{2}^{w}(E_{k})}%
\end{equation}
for any finite set of vectors $(x^{k}_{j})_{j=0}^{m} \subseteq E_{k}$ for
$k=1,2$. We write $\Pi_{bas}(E_{1},E_{2};F)$ for the space of bilinear almost
summing multilinear maps, which is
endowed with the norm
\[
\|A\|_{bas}:= \inf\{C > 0\ \mbox{such\ that\ (\ref{bilinearalmostsumming})\ holds} \}.
\]
\end{definition}

 \begin{proposition} \label{relation} $\Pi_{bas}(E_{1},E_{2};F)\subseteq \Pi_{as}(E_{1},E_{2};F)$
\end{proposition}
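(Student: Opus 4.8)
The plan is to deduce the inequality \eqref{almostsumming} for $A\in\Pi_{bas}(E_1,E_2;F)$ directly from \eqref{bilinearalmostsumming}, the only real content being a comparison between the norm of $Rad(F)$ and the norm of $Rad^{(2)}(F)$ on diagonal-type sequences. Concretely, given a finite family $(x^1_j)_{j=0}^m\subseteq E_1$ and $(x^2_j)_{j=0}^m\subseteq E_2$, I would start from $\|(A(x^1_j,x^2_j))_{j=0}^m\|_{Rad(F)}$, i.e.\ from the quantity $\bigl(\int_0^1\|\sum_{j=0}^m A(x^1_j,x^2_j)r_j(t)\|^2\,dt\bigr)^{1/2}$, and try to realize the single Rademacher average $\sum_j A(x^1_j,x^2_j)r_j(t)$ as (a piece of, or a conditional expectation of) the double Rademacher average $\sum_{i,j}A(x^1_i,x^2_j)r_i(s)r_j(t)$ that appears in the definition of $\|(A(x^1_i,x^2_j))_{i,j}\|_{Rad^{(2)}(F)}$.

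The key step is the following observation about the Rademacher system: the products $\{r_ir_j: i\le j\}$ (equivalently the Walsh functions of order $\le 2$) form an orthonormal set in $L^2[0,1]$, so that $\int_0^1 (\sum_{i,j}c_{ij}r_i(t)r_j(t))\,g(t)\,dt$ picks out precisely the diagonal coefficients when $g=\sum_k d_k r_k r_{k}$-type test functions are used; more usefully, a standard device (substitution $t\mapsto$ a second variable, or the multiplicativity $r_i(s)r_j(s)=r_{\{i,j\}}(s)$ for a suitable relabelling) lets one extract the diagonal sum $\sum_j A(x^1_j,x^2_j)r_j$ as a vector-valued conditional expectation of $\sum_{i,j}A(x^1_i,x^2_j)r_ir_j$. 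Since conditional expectation is a norm-one operator on $L^2([0,1],F)$, this yields
\[
\Bigl\|\bigl(A(x^1_j,x^2_j)\bigr)_{j=0}^m\Bigr\|_{Rad(F)}\le \Bigl\|\bigl(A(x^1_i,x^2_j)\bigr)_{i,j=0}^m\Bigr\|_{Rad^{(2)}(F)},
\]
and then \eqref{bilinearalmostsumming} gives exactly $\le \|A\|_{bas}\,\|(x^1_j)_{j=0}^m\|_{\ell_2^w(E_1)}\|(x^2_j)_{j=0}^m\|_{\ell_2^w(E_2)}$, which is \eqref{almostsumming} with constant $\|A\|_{bas}$. Hence $A\in\Pi_{as}(E_1,E_2;F)$ with $\|A\|_{as}\le\|A\|_{bas}$, proving the inclusion (and even that it is norm-nonincreasing).

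I would carry this out in the order: (1) fix the finite families and write out both Rademacher expressions; (2) prove the comparison $\|(A(x^1_j,x^2_j))_j\|_{Rad(F)}\le\|(A(x^1_i,x^2_j))_{i,j}\|_{Rad^{(2)}(F)}$ — e.g.\ by exhibiting the diagonal sequence as a conditional expectation, or, if one prefers an elementary route, by the projection argument using orthogonality of the Walsh functions $r_ir_j$ together with Kahane's inequalities to move freely between $L^1$ and $L^2$ averages; (3) chain it with \eqref{bilinearalmostsumming}. The main obstacle is step (2): one must be a little careful that the map sending the double-indexed array $(x_{i,j})$ to its ``diagonal'' $(x_{i,i})$ is genuinely a norm-one (or at least bounded-by-one) operation from $Rad^{(2)}(F)$ to $Rad(F)$, since the Walsh functions $r_ir_j$ with $i\neq j$ are not simply $r_{\max(i,j)}$-like and one cannot naively drop the off-diagonal terms. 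Making this precise — most cleanly via a conditional-expectation / averaging argument over an auxiliary variable so that the cross terms $r_i(s)r_j(t)$ with $i\neq j$ average to zero while $r_j(s)r_j(t)$ survives — is the crux; everything else is immediate from the definitions.
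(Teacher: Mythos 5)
Your overall plan is right in spirit, but the crux, your step (2), contains a genuine error: the inequality
\[
\bigl\|(x_{j,j})_{j=0}^m\bigr\|_{Rad(F)}\le \bigl\|(x_{i,j})_{i,j=0}^m\bigr\|_{Rad^{(2)}(F)}
\]
is false, and no conditional-expectation argument can prove it. The reason is that $Rad^{(2)}(F)$, as defined in the paper, involves a \emph{single} variable $t$ in both Rademacher factors, so $r_j(t)^2=1$ and the diagonal part of $\sum_{i,j}x_{i,j}r_i(t)r_j(t)$ collapses to the constant $\sum_j x_{j,j}$, while the off-diagonal part consists of second-order Walsh functions orthogonal to the constants and to the $r_j$. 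Hence any conditional expectation of $\sum_{i,j}x_{i,j}r_ir_j$ still has ``diagonal content'' equal to the constant $\sum_j x_{j,j}$ and can never produce $\sum_j x_{j,j}r_j$. For a concrete counterexample take $F=\mathbb{K}$, $x_{i,j}=0$ for $i\neq j$ and $x_{j,j}=(-1)^j$: then $\sum_{i,j=0}^m x_{i,j}r_ir_j\equiv\sum_{j=0}^m(-1)^j\in\{0,1\}$, so $\|(x_{i,j})\|_{Rad^{(2)}(F)}=1$, whereas $\|(x_{j,j})_{j=0}^m\|_{Rad(F)}=\sqrt{m+1}$. Such arrays even arise from bilinear maps (take $A((s_1,s_2),(t_1,t_2))=s_1t_1-s_2t_2$ on $\mathbb{K}^2\times\mathbb{K}^2$ with $x^1_j=x^2_j=e_{j+1}$), so the tensor structure $x_{i,j}=A(x^1_i,x^2_j)$ does not rescue the estimate.

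The repair --- and this is what the paper actually does --- is not to compare the two norms of a \emph{fixed} array, but to exploit the freedom in the definition of $\Pi_{bas}$. From $\int_0^1 r_ir_j\,dt=\delta_{ij}$ and the triangle inequality for the Bochner integral one gets only control of the plain diagonal \emph{sum}:
\[
\Bigl\|\sum_{j=0}^m A(x^1_j,x^2_j)\Bigr\|\le\bigl\|(A(x^1_i,x^2_j))_{i,j}\bigr\|_{Rad^{(2)}(F)}\le\|A\|_{bas}\prod_{k=1}^{2}\|(x^k_j)_{j=0}^m\|_{\ell_2^w(E_k)}.
\]
Now fix $s\in[0,1]$ and apply this bound to the family $(r_j(s)x^1_j)_j$ in place of $(x^1_j)_j$; since $|r_j(s)|=1$ the weak-$\ell_2$ norm is unchanged, and the diagonal sum becomes $\sum_j r_j(s)A(x^1_j,x^2_j)$. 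The resulting bound is uniform in $s$, hence dominates the $L^2$ (or $L^1$, via Kahane) average, i.e.\ $\|(A(x^1_j,x^2_j))_j\|_{Rad(F)}$, which is exactly \eqref{almostsumming}. So your conclusion and essentially your norm estimate are correct, but they must be reached by randomizing the input vectors rather than by a diagonal-projection estimate inside $Rad^{(2)}(F)$.
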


\begin{proof} Let $A\in \Pi_{bas}(E_{1},E_{2};F)$ and $(x^{k}_{j})_{j=0}^{m} \subseteq E_{k}$ for
$k=1,2$. Denote $A(x^1_i,x^2_j)=x_{i,j}\in F$ for $0\le i,j\le m$.
By the orthogonality of the Rademacher functions we have
\begin{align*}\left\|\sum_{i=0}^m x_{i,i}\right\|&=\left\|\int_0^1 \sum_{i,j=0}^m x_{i,j}r_i(t)r_j(t)dt\right\|\le \int_0^1 \left\|\sum_{i,j=0}^m x_{i,j}r_i(t)r_j(t)\right\|dt\\&\le \|(x_{i,j})_{i,j=0}^m\|_{Rad^{(2)}(F)}\le \|A\|_{bas}\prod_{k=1}^{2} \Vert(x_{j}^{k})_{j=0}^{m}\Vert_{\ell
_{2}^{w}(E_{k})}.
\end{align*}
Therefore, replacing $x^1_i$ with $r_i(t)x^1_i$ one has
$$ \left\|\sum_{i=0}^m A(x^1_i,x^2_i)r_i(t)\right\|\le \|A\|_{bas}\cdot\prod_{k=1}^{2} \Vert(x_{j}^{k})_{j=0}^{m}\Vert_{\ell
_{2}^{w}(E_{k})},~ t\in [0,1].$$
Now integrating over $[0,1]$ one gets $\|A\|_{as}\le C\|A\|_{bas}.$
\end{proof}

\begin{theorem}
\label{bas} Let $M=\left\{(p_1,p_2,2)\in \mathbb R_+^3:  \frac{1}{2}=
\frac{1}{p_1}+ \frac{1}{p_2}\right\}$ and $E_1, E_2, F$ be Banach spaces. Then
\[
\bigcup_{(p_1,p_2,2)\in M}\Pi_{(2;p_1,p_2)}(E_{1}, E_{2};F)\subseteq\Pi
_{bas}(E_{1}, E_{2};F).
\]
\end{theorem}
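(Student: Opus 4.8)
The plan is to mimic the two-step structure used for the almost summing case: first prove the inclusion for the ``diagonal'' exponents, then pull down to arbitrary $(p_1,p_2)\in M$ via the Inclusion Theorem and the factorization of dominated mappings. Concretely, I would first show that
\[
\bigcup_{p>0}\Pi_{(p/2;p,p)}(E_1,E_2;F)\subseteq\Pi_{bas}(E_1,E_2;F).
\]
So let $A$ be $(p,p)$-dominated. By the Factorization Theorem \ref{fact} there are Banach spaces $G_1,G_2$, operators $u_k\in\Pi_p(E_k;G_k)$ and $B\in\mathcal L(G_1,G_2;F)$ with $A=B\circ(u_1,u_2)$. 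Fix finite families $(x_j^k)_{j=0}^m\subseteq E_k$, put $y_j^k=u_k(x_j^k)$, and consider the double Rademacher sum $\sum_{i,j=0}^m B(y_i^1,y_j^2)r_i(t)r_j(t)$. The point is to estimate its $L^2([0,1],F)$-norm by $\|B\|$ times a product of two $Rad$-type quantities, and then use that $u_k$ being $p$-absolutely summing makes it almost summing, i.e. $\|(u_k(x_j^k))_j\|_{Rad(G_k)}\le\|u_k\|_{as}\|(x_j^k)_j\|_{\ell_2^w(E_k)}$ by \eqref{lc}.

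The key computation is the bilinear analogue of the standard ``Rademacher reduction''. Writing $r_i(t)r_j(t)$ as the value at $(t,t)$ of $r_i(s)r_j(t)$ and using that $B$ is bilinear, one has the pointwise-in-$t$ identity $\sum_{i,j}B(y_i^1,y_j^2)r_i(t)r_j(t)=B\big(\sum_i y_i^1 r_i(t),\sum_j y_j^2 r_j(t)\big)$, whence
\[
\Big\|\sum_{i,j=0}^m B(y_i^1,y_j^2)r_i(t)r_j(t)\Big\|\le\|B\|\cdot\Big\|\sum_i y_i^1 r_i(t)\Big\|\cdot\Big\|\sum_j y_j^2 r_j(t)\Big\|.
\]
Integrating over $t\in[0,1]$ and applying Cauchy--Schwarz in $t$ splits the right side into $\|B\|$ times $\|(y_i^1)_i\|_{Rad_2(G_1)}\|(y_j^2)_j\|_{Rad_2(G_2)}$; taking the supremum over $m$ gives $\|(A(x_i^1,x_j^2))_{i,j}\|_{Rad^{(2)}(F)}\le\|B\|\,\|u_1\|_{as}\|u_2\|_{as}\prod_{k=1}^2\|(x_j^k)_j\|_{\ell_2^w(E_k)}$, which is exactly \eqref{bilinearalmostsumming}. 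This establishes the diagonal inclusion.

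For the general case, given $(p_1,p_2,2)\in M$ set $p_0=\max\{p_1,p_2\}$. Then $\tfrac1{p_1}+\tfrac1{p_2}-\tfrac12=0=\tfrac1{p_0}+\tfrac1{p_0}-\tfrac2{p_0}$, so the Inclusion Theorem \ref{inclusiontheorem} yields $\Pi_{(2;p_1,p_2)}(E_1,E_2;F)\subseteq\Pi_{(p_0/2;p_0,p_0)}(E_1,E_2;F)$, and the first step finishes the proof. I expect the only genuine subtlety to be the passage from the double Rademacher average to the product of single averages: one must be careful that $Rad^{(2)}(F)$ is defined with the \emph{diagonal} substitution $r_i(t)r_j(t)$ (same variable $t$), so the clean factorization $\sum_{i,j}B(y_i^1,y_j^2)r_i(t)r_j(t)=B(\sum_i y_i^1 r_i(t),\sum_j y_j^2 r_j(t))$ is available only because $B$ is bilinear — this is precisely why the statement is bilinear rather than $n$-linear, and where the whole argument hinges.
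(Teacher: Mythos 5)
Your argument is correct, but it follows a genuinely different route from the paper's. The paper proves the inclusion directly: it writes each Rademacher sum $\sum_j x_j^i r_j$ as a dyadic step function $\sum_{k=1}^{2^m} X_{k,m}^i\chi_{I_{k,m}}$, uses bilinearity to see that the double Rademacher sum equals the step function $\sum_k A(X_{k,m}^1,X_{k,m}^2)\chi_{I_{k,m}}$, applies the $(2;p_1,p_2)$-summing inequality to the normalized finite families $(X_{k,m}^i\,2^{-m/p_i})_k$ (this is exactly where $\frac{1}{p_1}+\frac{1}{p_2}=\frac12$ enters), and finally recognizes the resulting weak $\ell_{p_i}$-norms as $L^{p_i}$-norms of scalar Rademacher sums, which Khintchine/Kahane controls by $\|(x_j^i)_j\|_{\ell_2^w(E_i)}$. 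You instead reduce to the diagonal dominated case $\Pi_{(p_0/2;p_0,p_0)}$ via the Inclusion Theorem (legitimate, since $\frac{1}{p_1}+\frac{1}{p_2}=\frac12$ forces $p_0=\max\{p_1,p_2\}\ge 4$, so $2\le p_0/2$), factorize $A=B\circ(u_1,u_2)$ through $p_0$-summing, hence almost summing, linear operators, and push the two linear $Rad$ estimates through the bounded bilinear map $B$. This mirrors the paper's own proof that dominated $n$-linear mappings are almost summing, and it buys a shorter, more conceptual argument at the cost of invoking the Pietsch factorization theorem; the paper's direct computation avoids factorization and produces the bound with constant $C\,\pi_{(2;p_1,p_2)}(A)$ uniformly over $M$. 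One small correction to your key estimate: after taking the $L^2(dt)$-norm of the pointwise bound $\|B\|\cdot\|\sum_i y_i^1 r_i(t)\|\cdot\|\sum_j y_j^2 r_j(t)\|$, Cauchy--Schwarz yields the product of $L^4$-norms, i.e.\ $\|(y_i^1)_i\|_{Rad_4(G_1)}\|(y_j^2)_j\|_{Rad_4(G_2)}$, not the $Rad_2$-norms as written; you then need Kahane's inequality (only for the single sums, not for the order-two chaos) to pass back to $Rad(G_k)$, which merely changes the constant and is harmless for membership in $\Pi_{bas}$.
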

\begin{proof} Let $m\in \mathbb N$ and denote $I_{1,m}= [0,\frac{1}{2^m})$, $I_{k,m}=[\frac{k-1}{2^m}, \frac{k}{2^m})$ for $k\in \{2,3,\ldots, 2^m-1\}$ and $I_{2^m,m}=[\frac{2^m-1}{2^m}, 1]$. Hence we can define $X_{k,m}$ by means of the formula
$$\sum_{j=0}^m x_jr_j= \sum_{k=1}^{2^m}X_{k,m}\chi_{I_{k,m}}.$$
Let $x^1_j\in E_1, x^2_j\in E_2$ for $j=1,\ldots,m$. Note that
$$ \left\|\left(A(x^1_i,x^2_j )\right)_{i,j}\right\|_{Rad^{(2)}(F)}=\sup_{m\geq 0}\left(\int_0^1\left\|A\left(\sum_{i=0}^m x^1_ir_i(t), \sum_{j=0}^m x^2_jr_j(t)\right)\right\|^2 dt\right)^{1/2}.$$
On the other hand
\begin{align*}
A\left(\sum_{i=0}^m x^1_ir_i(t), \sum_{j=0}^m x^2_jr_j(t)\right)&=A\left(\sum_{k=1}^{2^m}X^1_{k,m}\chi_{I_{k,m}}(t),
\sum_{k'=1}^{2^m}X^2_{k',m}\chi_{I_{k',m}}(t)\right)\\
&=\sum_{k, k'=1}^{2^m}A\left(X^1_{k,m}\chi_{I_{k,m}}(t),X^2_{k',m}\chi_{I_{k',m}}(t)\right)\\
&=\sum_{k=1}^{2^m}A\left(X^1_{k,m},X^2_{k,m}\right)\chi_{I_{k,m}}(t).
\end{align*}
Finally observe that
\begin{align*}
 \left\|\sum_{k=1}^{2^m}A\left(X^1_{k,m},X^2_{k,m}\right)\chi_{I_{k,m}}\right\|_{L^2([0,1],F)}&=
\left(\sum_{k=1}^{2^m}\|A\left(X^1_{k,m},X^2_{k,m}\right)\|^2 2^{-m}\right)^{1/2}\\
&=
\left(\sum_{k=1}^{2^m}\left\|A\left(X^1_{k,m}2^{-m/p_1},X^2_{k,m}2^{-m/p_2}\right)\right\|^2 \right)^{1/2}\\
&\le
\pi_{(2;p_1,p_2)}(A)\cdot\sup_{\|x^*\|_{E_1^*}=1}\! \left(\sum_{k=1}^{2^m}\left|\left\langle  X^1_{k,m}2^{-m/p_1},x^*\right\rangle\right|^{p_1}\right)^{1/p_1}\\&~~~~\cdot
\sup_{\|y^*\|_{E_2^*}=1} \!\left(\sum_{k=1}^{2^m}\left|\left\langle  X^2_{k,m}2^{-m/p_2},y^*\right\rangle\right|^{p_2}\right)^{1/p_2}\\
&\le
\pi_{(2;p_1,p_2)}(A)\cdot \sup_{\|x^*\|_{E_1^*}=1}\! \left(\int_0^1 \left|\sum_{j=0}^m \langle x^1_j, x^*\rangle r_j(t)\right|^{p_1}dt\right)^{1/p_1}\\&~~~~\cdot
\sup_{\|y^*\|_{E_2^*}=1}\! \left(\int_0^1 \left|\sum_{j=0}^m \langle x^2_j, y^*\rangle r_j(t)\right|^{p_2}dt\right)^{1/p_2}\\
&\le C\pi_{2;p_1,p_2}(A) \cdot \|(x^1_j)\|_{\ell^2_w(E_1)}\cdot\|(x^2_j)\|_{\ell^2_w(E_2)}.
\end{align*}
\end{proof}

\section{Summability on $\ell_p$ spaces}

We start deriving consequences of Theorem \ref{as} in the context of $\ell_p$ spaces.

\begin{proposition}
\label{lp1}
If $E'$ has type 2 and $1\leq r\leq 2$, then
 $$\mathcal{L}(\ell_{1},E;\mathbb{K})=\Pi_{(1;2,1)}(\ell_{1}%
,E;\mathbb{K})=\Pi_{(r;2,r)}(\ell_{1}%
,E;\mathbb{K}).$$
In particular, if $1<q\leq2$ and $1\leq r\leq2$, then
 $$\mathcal{L}(\ell_{1},\ell_{q};\mathbb{K})=\Pi_{(1;2,1)}(\ell_{1}%
,\ell_q;\mathbb{K})=\Pi_{(r;2,r)}(\ell_{1}%
,\ell_{q};\mathbb{K}).$$
\end{proposition}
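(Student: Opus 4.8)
The plan is to deduce everything from Theorem \ref{as} together with the Inclusion Theorem \ref{inclusiontheorem}, once we establish the key almost-summing fact: if $E'$ has type $2$, then every continuous linear functional on $E$, viewed as an element of $\mathcal{L}(E;\mathbb{K})=E'$, belongs to $\Pi_{as}(E;\mathbb{K})$. Here the $1$-linear mapping $A_1$ associated to a bilinear form $A\in\mathcal{L}(\ell_1,E;\mathbb{K})$ is $A_1\colon \ell_1\longrightarrow \mathcal{L}(E;\mathbb{K})=E'$; since $\ell_1$ has cotype $2$ and the target $E'$ has type $2$, the Maurey–Pisier / Kwapień-type machinery (equivalently, $\mathcal{L}(\ell_1;X)=\Pi_{as}(\ell_1;X)$ for any Banach space $X$, because every operator on $\ell_1$ factors through its action on the unit vectors and $\ell_1$ has cotype $2$, so $\ell_2^w(\ell_1)$ sequences are controlled) gives $A_1\in\Pi_{as}(\ell_1;E')$. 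Actually the cleanest route: any bounded operator $u\colon\ell_1\to X$ is automatically absolutely $(1;1)$-summing (indeed $\|(u(e_j))\|_1$-type estimates hold because $\ell_1^w(X)=\ell_1(X)$... ), hence almost summing by (\ref{lc}); so in fact we do not even need the type-$2$ hypothesis on $E'$ at the level of $A_1$. I would double-check which hypothesis is actually needed and state it as I use it.

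With $A_1\in\Pi_{as}(\ell_1;\mathcal{L}(E;\mathbb{K}))$ in hand, Theorem \ref{as} with $n=2$, $k=1$ yields that $\widehat A\colon \ell_2^w(\ell_1)\times Rad(E)\longrightarrow\ell_1$ is bounded, with $\|\widehat A\|\le\|A_1\|_{as}$, and in particular $A\in\Pi_{(1;2,?)}(\ell_1,E;\mathbb{K})$ where the second exponent corresponds to $Rad(E)$. The step converting $Rad(E)$ into $\ell_1^w(E)$ is exactly where the type-$2$ assumption on $E'$ enters: by the standard duality, $E'$ having type $2$ is equivalent to $E$ having cotype $2$ in the relevant sense, which gives a continuous inclusion $\ell_1^w(E)\hookrightarrow Rad(E)$ (equivalently, the identity on $E$ is $(2;1)$-summing-type, so $\|(x_j)\|_{Rad(E)}\le c\|(x_j)\|_{\ell_1^w(E)}$). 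Substituting this inclusion into the conclusion of Theorem \ref{as} gives $\|\widehat A\colon\ell_2^w(\ell_1)\times\ell_1^w(E)\to\ell_1\|<\infty$, i.e. $A\in\Pi_{(1;2,1)}(\ell_1,E;\mathbb{K})$, and since this holds for every $A\in\mathcal{L}(\ell_1,E;\mathbb{K})$ we get the coincidence $\mathcal{L}(\ell_1,E;\mathbb{K})=\Pi_{(1;2,1)}(\ell_1,E;\mathbb{K})$.

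For the second equality, $\Pi_{(1;2,1)}(\ell_1,E;\mathbb{K})=\Pi_{(r;2,r)}(\ell_1,E;\mathbb{K})$ for $1\le r\le 2$: the inclusion ``$\subseteq$'' is immediate from the Inclusion Theorem \ref{inclusiontheorem}, since $\frac{1}{2}+\frac11-\frac11=\frac12\le\frac12+\frac1r-\frac1r=\frac12$ holds and $1\le r$, $1\le r$. The reverse inclusion ``$\supseteq$'' is trivial because $\Pi_{(r;2,r)}(\ell_1,E;\mathbb{K})\subseteq\mathcal{L}(\ell_1,E;\mathbb{K})=\Pi_{(1;2,1)}(\ell_1,E;\mathbb{K})$ by the first equality. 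Finally, the ``in particular'' statement is the special case $E=\ell_q$ with $1<q\le 2$: it is classical that $\ell_q'=\ell_{q'}$ with $q'\ge 2$ has type $2$ (cotype-$2$ / type-$2$ of $L_p$ spaces), so the hypothesis is met. The main obstacle I anticipate is pinning down precisely the equivalence ``$E'$ has type $2$'' $\Longleftrightarrow$ ``$\ell_1^w(E)\hookrightarrow Rad(E)$ continuously'' and making sure no extra hypothesis (like also requiring $E$ itself to be nice) is secretly needed; once that inclusion is justified, everything else is a bookkeeping application of Theorems \ref{as} and \ref{inclusiontheorem}.
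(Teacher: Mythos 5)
Your skeleton is the paper's: apply Theorem \ref{as} with $n=2$, $k=1$ to get $A\in\Pi_{(1;2,1)}(\ell_1,E;\mathbb{K})$, then use the Inclusion Theorem for the right-hand equality and the fact that $\ell_q'=\ell_{q'}$ has type $2$ for the ``in particular''. However, your justification of the key step $A_1\in\Pi_{as}(\ell_1;E')$ contains a genuine error, and you have attached the type-$2$ hypothesis to the wrong step. The ``cleanest route'' you settle on --- that every bounded operator $u\colon\ell_1\to X$ is absolutely $(1;1)$-summing, hence almost summing by (\ref{lc}) --- is false: absolutely summing operators are weakly compact, so the identity on $\ell_1$ is not $p$-summing for any $p$, and $\ell_1^w(X)=\ell_1(X)$ only when $X$ is finite dimensional (Dvoretzky--Rogers). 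If that step were hypothesis-free, the proposition would hold for every Banach space $E$, which it does not: for $E=c_0$ the pairing $A(x,y)=\sum_j x_jy_j$ on $\ell_1\times c_0$ with $x_j=j^{-3/4}e_j\in\ell_2^w(\ell_1)$ and $y_j=e_j\in\ell_1^w(c_0)$ gives $\sum_j|A(x_j,y_j)|=\sum_j j^{-3/4}=\infty$. The type-$2$ assumption on $E'$ is needed precisely here: since $\ell_1$ has cotype $2$ and $E'$ has type $2$, \cite[Theorem 12.10]{Diestel} yields $A_1\in\Pi_{as}(\ell_1;E')$. This is exactly the first alternative you sketch and then abandon; the paper takes it.

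Conversely, the step where you do invoke type $2$ --- replacing $Rad(E)$ by $\ell_1^w(E)$ --- requires no hypothesis whatsoever: for every Banach space $E$ and almost every $t$ one has $\left\|\sum_j r_j(t)x_j\right\|=\sup_{\varphi\in B_{E'}}\left|\sum_j r_j(t)\varphi(x_j)\right|\le\|(x_j)\|_{\ell_1^w(E)}$, so $\ell_1^w(E)\hookrightarrow Rad(E)$ contractively. This free inclusion is precisely why the ``in particular'' clause of Theorem \ref{as} already places the exponent $1$ (rather than something depending on $E$) in the $Rad$ coordinates. The remainder of your argument --- the Inclusion Theorem computation giving $\Pi_{(1;2,1)}\subseteq\Pi_{(r;2,r)}$ for $1\le r\le 2$, the trivial reverse inclusion, and the specialization to $E=\ell_q$ --- agrees with the paper; note only that the paper proves the complex case and obtains the real case by complexification, a point you omit.
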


\begin{proof}
We only treat the case $\mathbb{K}=\mathbb{C}.$ The case $\mathbb{K}%
=\mathbb{R}$ follows from a complexification argument (see \cite{Junek-preprint, michels, Thesis} for
details).
 Let $A\in\mathcal{L}(\ell_{1},E;\mathbb{C})$. Since $E^{\prime}$ has type
$2,$ it follows from \cite[Theorem 12.10]{Diestel} that $A_{1}\in\Pi_{as}(\ell_{1};E^{\prime})$. So,
from Theorem \ref{as} it follows that%
\[
\widehat{A}\colon\ell_{2}^{w}(\ell_{1})\times\ell_{1}^{w}(E)\longrightarrow
\ell_{1}%
\]
is bounded. Hence $A\in\Pi_{(1;2,1)}(\ell_{1},E;\mathbb{C})$. The
Inclusion Theorem yields now the right hand side equality.
\end{proof}

Proposition \ref{lp1} yields for $r=1$ that
\[
\mathcal{L}(\ell_{1},\ell_{q};\mathbb{K})=\Pi_{(1;2,1)}(\ell_{1}, \ell_{q}%
;\mathbb{K})
\]
for $1\leq q\leq 2$.
Whereas, from the Inclusion Theorem and the Defant-Voigt Theorem, one also has, for any $1\le q< \infty$,
\[
\mathcal{L}(\ell_{2},\ell_{q};\mathbb{K})=\Pi_{(2;2,1)}(\ell_{2}, \ell_{q}%
;\mathbb{K}).
\]
So, by interpolation one may expect that for $1<p<2$ and $1\leq q\leq 2,$
\begin{equation}\label{eqnao}
\mathcal{L}(\ell_{p},\ell_q;\mathbb{K})=\Pi_{(p;2,1)}(\ell_{p}, \ell_q%
;\mathbb{K}).
\end{equation}

We do not know if (\ref{eqnao}) holds, but we shall prove now that it does not hold for  $q >2$ (cf. Proposition \ref{propop}) and, moreover, next we will see that we can get quite close to (\ref{eqnao}) via interpolation (cf. Theorem \ref{t2}).

%


The following lemma, which is less general than \cite[Theorem 2.3]{nosso}, shall be used twice later. We give a short proof for the convenience of the reader.

\begin{lemma}
\label{cotipo} Let $F$ be a Banach space, $1\le k\le n$ and assume that $E_{i}$ has finite
cotype $c_i$, $i=1,\ldots, k$. Let $p\le q$.\\
{\rm (i)} If $c_i>2$ for all $i=1,\ldots,k$, then
$$\Pi_{(p;1,\ldots,1, p_{k+1},\ldots,p_{n})}(E_{1},\ldots,E_{n};F) = \Pi
_{(q;q_{1},\ldots,q_{k}, p_{k+1},\ldots.,p_{n})}(E_{1},\ldots,E_{n};F)
$$ for any $1\le q_{i}<c_i'$, $i=1,\ldots,k$,  such that
$\sum_{i=1}^{k}\frac{1}{q_{i}}-\frac{1}{q} = k-\frac{1}{p}$.\\
{\rm (ii)} If $c_i=2$ for all $i=1, \ldots, k'$ for some $k'\le k$, then
    $$
\Pi_{(p;1,\ldots,1, p_{k+1},\ldots,p_{n})}(E_{1},\ldots,E_{n};F) = \Pi
_{(q;q_{1},\ldots,q_{k}, p_{k+1},\ldots.,p_{n})}(E_{1},\ldots,E_{n};F)
$$
 for any $1\le q_{i}\leq 2$, $i=1,\ldots,k'$ and $1\le q_{i}<c_i'$, $i=k'+1,\ldots,k$, such that
$\sum_{i=1}^{k}\frac{1}{q_{i}}-\frac{1}{q} = k-\frac{1}{p}$.
\end{lemma}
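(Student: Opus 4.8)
The plan is to reduce everything to an appropriate application of the Inclusion Theorem \ref{inclusiontheorem} combined with the fact that the "$p$-summing norm computed against Rademacher-type sequences" can be estimated via cotype. Throughout, only the first $k$ coordinates are being modified, so I would fix arbitrary finite families $(x^{k+1}_j),\dots,(x^n_j)$ in $E_{k+1},\dots,E_n$ and regard $B=A(\cdot,\dots,\cdot,x^{k+1}_j,\dots)$ — or rather work coordinate by coordinate — so that the whole statement becomes a statement about changing the exponents $1\mapsto q_i$ in the first $k$ slots of an absolutely summing inequality. One inclusion, namely $\Pi_{(p;1,\dots,1,p_{k+1},\dots,p_n)}\subseteq\Pi_{(q;q_1,\dots,q_k,p_{k+1},\dots,p_n)}$, is automatic from Theorem \ref{inclusiontheorem}: since $q_i\ge 1$ and $p\le q$, the numerical condition $\sum_{i=1}^k\frac1{q_i}-\frac1q=k-\frac1p=\sum_{i=1}^k\frac11-\frac1p$ is exactly the equality case of the inclusion hypothesis $\sum\frac1{q_i}-\frac1q\le\sum\frac11-\frac1p$, with the tail exponents unchanged. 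So the content is the reverse inclusion.

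For the reverse inclusion the key point is a well-known consequence of finite cotype: if $E$ has cotype $c$, then for $1\le s<c'$ (or $s\le 2$ when $c=2$) one has $\|(x_j)\|_{\ell_s^w(E)}\lesssim\|(x_j)\|_{\ell_1^w(E)}$ — more precisely, finite families that are weakly-$1$-summable are, up to a constant depending only on $E$, weakly-$s$-summable, because in a space of cotype $c$ the natural inclusion $\ell_1^w(E)\hookrightarrow\ell_s^w(E)$ is bounded for such $s$; this is essentially the dual/companion of the Maurey–Pisier-type estimates used in \cite{nosso} and underlies \cite[Theorem 2.3]{nosso}. Granting this, take $A\in\Pi_{(q;q_1,\dots,q_k,p_{k+1},\dots,p_n)}$. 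Given finite families with $\|(x^i_j)\|_{\ell_1^w(E_i)}\le 1$ for $i\le k$ and $\|(x^i_j)\|_{\ell_{p_i}^w(E_i)}\le 1$ for $i>k$, apply the cotype inclusion in each of the first $k$ coordinates to get $\|(x^i_j)\|_{\ell_{q_i}^w(E_i)}\le C_i$, then feed these into the $(q;q_1,\dots,q_k,p_{k+1},\dots,p_n)$-summing inequality to obtain $\|(A(x^1_j,\dots,x^n_j))_j\|_q\le C\,\pi_{(q;\dots)}(A)$. To pass from an $\ell_q$-bound to the required $\ell_p$-bound ($p\le q$) on the scalar/vector sequence $(A(x^1_j,\dots,x^n_j))_j$ one uses the same numerical identity: the linear functional (or more precisely the $n$-linear form) is then seen to improve $\ell_1^w\times\cdots$ summability all the way to $\ell_p$ by a second application of the Inclusion Theorem, or directly by noting the exponent bookkeeping closes. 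I would organize (i) and (ii) uniformly: the only difference is which range of $q_i$ the cotype inclusion is available for — $q_i<c_i'$ in general, and the endpoint $q_i=2=c_i'$ permitted exactly when $c_i=2$ — so I would prove the inclusion lemma once with the hypothesis "$q_i$ admissible for $E_i$" and then remark that (i) and (ii) are the two instantiations.

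I expect the main obstacle to be the cotype-to-weak-summability estimate itself: stating and citing it cleanly (it is the $\ell^w$ version of "cotype $c$ implies $id_E$ is $(c',\text{weak-}1)$-mixing"-type facts, cf.\ \cite[Ch.\ 11]{Diestel} and \cite{nosso}), being careful that the borderline exponent $c'$ is allowed only when $c=2$, and that the constant depends only on $E_i$ and $q_i$, not on the length of the family. A secondary subtlety is the interchange of roles between the "source" exponents and the "target" exponent: I must make sure that after replacing $1$ by $q_i$ in the $k$ source slots, the Inclusion Theorem's hypothesis $\sum\frac1{q_i}+\sum_{i>k}\frac1{p_i}-\frac1q\le\sum\frac11+\sum_{i>k}\frac1{p_i}-\frac1p$ holds with equality given $\sum_{i=1}^k\frac1{q_i}-\frac1q=k-\frac1p$, which is immediate, so both inclusions are legitimate and the two spaces coincide. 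Modulo invoking these standard facts, the proof is a short two-step argument: Inclusion Theorem one way, cotype inclusion plus Inclusion Theorem the other way.
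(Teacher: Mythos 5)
There is a genuine gap in the reverse inclusion, and it sits exactly at the point you flagged as "the main obstacle." The cotype fact you propose to use, namely that $\|(x_j)\|_{\ell_{q_i}^w(E_i)}\lesssim\|(x_j)\|_{\ell_1^w(E_i)}$, is actually trivially true for \emph{every} Banach space with constant $1$ (since $\|(\varphi(x_j))\|_{q_i}\le\|(\varphi(x_j))\|_{1}$ for each functional $\varphi$), so it carries no cotype information. Feeding it into the $(q;q_1,\dots,q_k,p_{k+1},\dots,p_n)$-summing inequality only yields $A\in\Pi_{(q;1,\dots,1,p_{k+1},\dots,p_n)}$, i.e.\ an $\ell_q$-bound on $(A(x^1_j,\dots,x^n_j))_j$. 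The step you then wave at --- passing from this $\ell_q$-bound to the required $\ell_p$-bound with $p\le q$ by "a second application of the Inclusion Theorem, or directly by noting the exponent bookkeeping closes" --- cannot work: the Inclusion Theorem only moves the target exponent \emph{up} (from $q$ toward larger values), never down from $q$ to $p<q$, and there is no bookkeeping identity that converts an $\ell_q$ estimate on a fixed sequence into an $\ell_p$ one. Improving the target exponent is precisely the non-trivial content of the lemma, and it is where cotype must actually enter.

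What the paper uses instead is the \emph{multiplicative factorization} $\ell_1^w(E_i)=\ell_{q_i'}\cdot\ell_{q_i}^w(E_i)$ from \cite[Proposition 6]{AB1}, valid when $E_i$ has cotype $c_i$ with $c_i<q_i'$ (part (b)) or $c_i=2$, $q_i\le 2$ (part (a)): every $(x_j^i)\in\ell_1^w(E_i)$ can be written as $x_j^i=\alpha_j^i y_j^i$ with $(\alpha_j^i)\in\ell_{q_i'}$ and $(y_j^i)\in\ell_{q_i}^w(E_i)$. This is strictly stronger than the containment you invoke, and the scalar factors are exactly what you are missing: applying the $(q;q_1,\dots)$-summing hypothesis to the $(y_j^i)$ gives $(A(y_j^1,\dots,y_j^k,x_j^{k+1},\dots,x_j^n))_j\in\ell_q(F)$, the product $(\alpha_j^1\cdots\alpha_j^k)_j$ lies in $\ell_r$ with $\frac1r=\sum_{i=1}^k\frac1{q_i'}=k-\sum_{i=1}^k\frac1{q_i}$, and H\"older's inequality with $\frac1r+\frac1q=\frac1p$ (which is your numerical identity in disguise) lands $(A(x_j^1,\dots,x_j^n))_j$ in $\ell_p(F)$. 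Your treatment of the easy inclusion, and your organization of (i) versus (ii) as the two admissible ranges of $q_i$, are both fine; the proof is repaired by replacing your containment lemma with the Arregui--Blasco factorization and the final Inclusion-Theorem step with H\"older.
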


\begin{proof} In both cases the inclusion
$$\Pi_{(p;1,\ldots,1, p_{k+1},\ldots,p_n)}(E_{1},\ldots,E_{n};F) \subseteq \Pi_{(q;q_1,\ldots,q_k, p_{k+1},\ldots.,p_n)}(E_{1},\ldots,E_{n};F)$$
follows from Theorem \ref{inclusiontheorem}. Assume first that $c_i>2$ for all $i=1,\ldots , k$.
Let $A \in
\Pi_{(q;q_1,\ldots,q_k, p_{k+1},\ldots,p_n)}(E_{1},\ldots,E_{n};F)$,
$(x_j^i)_{j=1}^\infty \in \ell_{1}^w(E_i)$ for $i = 1, \ldots, k$,
and $(x_j^i)_{j=1}^\infty \in \ell_{p_i}^w(E_i)$ for $i = k+1,
\ldots, n$.  Since $E_i$ has cotype $c_i$ and $c_i<q_i'$, by \cite[Proposition
6(b)]{AB1} we know that $\ell_1^w(E_i) = \ell_{q_i'} \cdot
\ell_{q_i}^w(E_i)$, $i = 1, \ldots, k$. Hence there are
$(\alpha_j^i)_{j=1}^\infty \in \ell_{q_i'}$ and $(y_j^i)_{j=1}^\infty
\in \ell_{q_i}^w(E_k)$ such that $(x_j^i)_{j=1}^\infty = (\alpha_j^i
y_j^i)_{j=1}^\infty$, $i = 1, \ldots, k$. In this fashion,
$(\alpha_j^1 \cdots \alpha_j^k)_{j=1}^\infty \in \ell_{q_1'} \cdots
\ell_{q_k'} = \ell_{r}$, where $\frac 1r=\sum_{i=1}^k \frac{1}{q_i'}$, and $(A(y_j^1, \ldots,
y_j^k,x_j^{k+1},\ldots,x_j^n))_{j=1}^\infty \in \ell_{q}(F)$. Since
$\frac{1}{r} + \frac{1}{q} = \frac{1}{p}$ it follows that
$$ (A(x_j^1, \ldots, x_j^n))_{j=1}^\infty =(\alpha_j^1 \cdots \alpha_j^k A(y_j^1, \ldots, y_j^k, x^{k+1}_j,\ldots,x^n_j))_{j=1}^\infty
\in \ell_p(F),$$
which proves (i). If $c_i=2$ for all $i=1,\ldots,k$, then we use \cite[Proposition~6(a)]{AB1} to get  in a similar way that
$$
\Pi_{(q;2,\ldots,2, p_{k+1},\ldots.,p_{n})}(E_{1},\ldots,E_{n};F) \subseteq \Pi_{(p;1,\ldots,1, p_{k+1},\ldots,p_{n})}(E_{1},\ldots,E_{n};F).
$$
\end{proof}

Now we can prove that (\ref{eqnao}) does not hold for $q > 2$:

\begin{proposition}\label{propop}  Let $1<p<4/3 $ and $\frac{2p}{2-p}<q<\frac{p}{p-1}$. Then
$$ \Pi_{(p;2,1)}(\ell_{p}, \ell_q%
;\mathbb{K})    \neq \mathcal{L}(\ell_{p},\ell_q;\mathbb{K}).$$
\end{proposition}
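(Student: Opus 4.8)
The plan is to refute the \emph{equality} in~(\ref{eqnao}) by producing a single continuous bilinear form on $\ell_p\times\ell_q$ that is not $(p;2,1)$-summing; since $\Pi_{(p;2,1)}(\ell_p,\ell_q;\mathbb{K})\subseteq\mathcal{L}(\ell_p,\ell_q;\mathbb{K})$ always holds, one counterexample suffices. The natural candidate is the diagonal form $A(x,y)=\sum_{j=1}^{\infty}x_jy_j$. First I would check that $A$ is a well-defined bounded bilinear form: since $q<\tfrac{p}{p-1}=p'$, the inclusion $\ell_q\hookrightarrow\ell_{p'}$ is contractive, so H\"older's inequality gives $\sum_j|x_jy_j|\le\|x\|_{\ell_p}\|y\|_{\ell_{p'}}\le\|x\|_{\ell_p}\|y\|_{\ell_q}$, i.e. $A\in\mathcal{L}(\ell_p,\ell_q;\mathbb{K})$ with $\|A\|\le1$ (no complexification is needed here).

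Next I would assume, toward a contradiction, that $A\in\Pi_{(p;2,1)}(\ell_p,\ell_q;\mathbb{K})$ with summing constant $C$, and test the defining inequality on $x^j=e_j\in\ell_p$ and $y^j=e_j\in\ell_q$ for $j=1,\dots,n$. The left-hand side equals $\big(\sum_{j=1}^{n}|A(e_j,e_j)|^p\big)^{1/p}=n^{1/p}$, since $A(e_j,e_j)=1$. For the right-hand side, using duality one computes $\|(e_j)_{j=1}^{n}\|_{\ell_2^w(\ell_p)}=\sup_{\|\varphi\|_{\ell_{p'}}\le1}\big(\sum_{j=1}^{n}|\varphi_j|^2\big)^{1/2}=n^{1/2-1/p'}=n^{1/p-1/2}$ (here $p<4/3<2$ gives $p'>2$, so the supremum is reached at $\varphi=n^{-1/p'}\sum_{j=1}^{n}e_j$ by the power-mean inequality), and likewise $\|(e_j)_{j=1}^{n}\|_{\ell_1^w(\ell_q)}=\sup_{\|\psi\|_{\ell_{q'}}\le1}\sum_{j=1}^{n}|\psi_j|=n^{1-1/q'}=n^{1/q}$. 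The $(p;2,1)$-summing inequality would therefore force $n^{1/p}\le C\,n^{1/p-1/2}\,n^{1/q}$, that is, $n^{1/2-1/q}\le C$ for every $n\in\mathbb{N}$. Because $q>\tfrac{2p}{2-p}>2$ (the last inequality holds since $p>1$), we have $\tfrac12-\tfrac1q>0$, so letting $n\to\infty$ gives a contradiction. Hence $A\notin\Pi_{(p;2,1)}(\ell_p,\ell_q;\mathbb{K})$ and the proposition follows.

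I do not foresee a serious obstacle: the whole argument reduces to correctly evaluating the two weak norms of the unit-vector sequence in $\ell_p$ and $\ell_q$ (routine power-mean/duality estimates) and checking that the powers of $n$ do not cancel. It is worth noting that the proof really only uses $q<\tfrac{p}{p-1}$ (so that $A$ is bounded) together with $q>2$ (so that $n^{1/2-1/q}\to\infty$); the hypothesis $1<p<4/3$ serves precisely to make the interval $\big(\tfrac{2p}{2-p},\tfrac{p}{p-1}\big)$ nonempty, so the statement is not vacuous. Equivalently, one could first invoke the Inclusion Theorem~\ref{inclusiontheorem} to see that the putative equality in~(\ref{eqnao}) entails $\Pi_{(s;2,2)}(\ell_p,\ell_q;\mathbb{K})=\mathcal{L}(\ell_p,\ell_q;\mathbb{K})$ for every $s<\tfrac{2p}{2-p}$, and then observe that the same diagonal form violates this as well; but the direct estimate above is shorter and self-contained.
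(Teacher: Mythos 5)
Your proof is correct, but it takes a genuinely different route from the paper's. The paper argues by contradiction: assuming the coincidence, it uses Lemma \ref{cotipo} (via the cotype $2$ of $\ell_p$) to convert $(p;2,1)$-summability into $(1;r,1)$-summability, transfers this to a coincidence $\mathcal{L}(\ell_q;\ell_{p'})=\Pi_{(r';1)}(\ell_q;\ell_{p'})$ for \emph{linear} operators, and then contradicts the Bennett--Carl result on the inclusion $\ell_q\hookrightarrow\ell_{p'}$; that chain is what produces the threshold $q>\frac{2p}{2-p}$. You instead exhibit an explicit witness: the diagonal form $A(x,y)=\sum_j x_jy_j$, bounded because $q<p'$, tested on the unit vector basis. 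Your norm computations check out: $\|(e_j)_{j=1}^n\|_{\ell_2^w(\ell_p)}=n^{1/p-1/2}$ (using $p'>2$) and $\|(e_j)_{j=1}^n\|_{\ell_1^w(\ell_q)}=n^{1/q}$, so the summing inequality would force $n^{1/2-1/q}\le C$, which fails since $q>\frac{2p}{2-p}>2$. Your argument is more elementary (only H\"older and duality) and in fact proves strictly more than the stated proposition: the coincidence already fails for every $1<p<2$ and every $2<q<p'$, a range the paper's method cannot reach for $2<q\le\frac{2p}{2-p}$. What the paper's route buys in exchange is the structurally interesting observation that a bilinear coincidence of this kind forces a linear coincidence $\mathcal{L}(\ell_q;\ell_{p'})=\Pi_{(r';1)}(\ell_q;\ell_{p'})$. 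One small slip in your optional closing remark: the Inclusion Theorem gives $\Pi_{(p;2,1)}\subseteq\Pi_{(s;2,2)}$ for $s\ge\frac{2p}{2-p}$, not for $s<\frac{2p}{2-p}$; this does not affect your main argument, which stands on its own.
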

\begin{proof}
Assume that
\begin{equation*}
\mathcal{L}(\ell_{p},\ell_q;\mathbb{K})=\Pi_{(p;2,1)}(\ell_{p}, \ell_q%
;\mathbb{K})
\end{equation*}
for some $\frac{2p}{2-p}<q<\frac{p}{p-1}$.
Since $\ell_p$ has cotype  2, one concludes from Lemma \ref{cotipo}(ii) and the assumption that, for $1/s- 1/p=1/2$ and $1/r'=1/p-1/2$,
\[
\mathcal{L}(\ell_{p},\ell_q;\mathbb{K})=\Pi_{( s; 1,1)}(\ell_{p}, \ell_q%
;\mathbb{K})=\Pi_{(1;r,1)}(\ell_{p}, \ell_q%
;\mathbb{K}).
\]
Let us see that if $T \colon \ell_{q}\longrightarrow \ell_{p'}$ is a bounded linear operator, then $\widehat T \colon \ell_1^w(\ell_q)\longrightarrow \ell_{r'}( \ell_{p'})$ is also bounded. It suffices to be  proved that for any sequences $(y_j)_{j=1}^\infty\in \ell_1^w(\ell_q)$ and $(x_j)_{j=1}^\infty \in \ell_r(\ell_p)=\ell_r'(\ell_p')^*$ the product $(\langle T(y_j),x_j\rangle)_{j=1}^\infty$ belongs to $\ell_1$. Since $\ell_r(\ell_p) \subseteq \ell_r^w(\ell_p)$, the sequence $(x_j)_{j=1}^\infty$ belongs to $\ell_r^w(\ell_p)$. Since the bilinear form
$$(x,y) \in \ell_p \times \ell_q \mapsto \langle T(y), x \rangle $$
is $(1;r,1)$-summing, it follows that $(\langle T(y_j), x_j\rangle)_{j=1}^\infty \in \ell_1$, hence, $\widehat T((y_j)_{j=1}^\infty) = (T(y_j))_{j=1}^\infty \in \ell_{r'}( \ell_{p'})$. We have just proved that $\mathcal{L}(\ell_{q};\ell_{p'})=\Pi_{(r';1)}(\ell_{q};\ell_{p'}).$ The condition on $r$ means that $r'<q<p'$ and, in this case, the formal inclusion $Id \colon \ell_{q}\longrightarrow \ell_{p'}$ would belong to $\Pi_{(r';1)}(\ell_q;\ell_{p'}).$
However, according to a result due to Carl and Bennet, independently, (see \cite[p. 209]{Diestel}), $Id\in \Pi_{(q;1)}(\ell_q;\ell_{p'})$ and $Id\notin \Pi_{(s;1)}(\ell_q;\ell_{p'})$ for $s < q$. So $q\le r'$, a contradiction that completes the proof.
\end{proof}

Let us return to the consequences of Theorem \ref{as} and Proposition \ref{lp1}. For a real number $a$, let us fix the notation $a^+ := {\rm max}\{a,0\}$. Under type/cotype assumptions we can make use of a result due to Carl and Bennet
which establishes, for $s_1\le s$, that $Id \colon \ell_{s_1}\longrightarrow \ell_s$ is $(a, 1)$-summing for $1/a =
1/s_1-(1/s-1/2)^+ $, to improve the summability of bilinear forms whenever they are restricted to a ``smaller" domain. For $1\leq s_1<s$ and $A\in {\mathcal L}(\ell_{s},E;\mathbb{K})$, let us also use $A$ to denote its restriction to $\ell_{s_1}\times E$. Henceforth the inclusion
\[
\mathcal{L}(\ell_{s},E;\mathbb{K}) \subseteq
\Pi_{(q;q_1,q_2)}(\ell_{s_1},E%
;\mathbb{K})
 \]
 means that the restriction to $\ell_{s_1}\times E$ of any continuous bilinear form defined on $\ell_s\times E$ is $(q;q_1,q_2)-$summing.

\begin{proposition}
\label{t20} Let $1\le s_1<s$ and $a$ given by $1/a=1/s_1-(1/s -1/2)^+$. Assume that  $E$ has finite cotype $r$.\\
{\rm (i)} If $s_1>2$ then $\mathcal{L}(\ell_{s},E;\mathbb{K})\subseteq \Pi_{(q;q_1,1)}(\ell_{s_1},E%
;\mathbb{K})
$ for any $1\leq q_1<s_1'$ and $q$ given by  $1/q_1-1/q=1-1/a-1/r$.\\
{\rm (ii)} If $s_1\le 2$ then $\mathcal{L}(\ell_{s},E;\mathbb{K})\subseteq \Pi_{(q;q_1,1)}(\ell_{s_1},E%
;\mathbb{K})
$ for any $1\leq q_1\le 2$ and $q$ given by  $1/q_1-1/q=1-1/a-1/r$.
\end{proposition}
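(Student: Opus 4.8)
The plan is to first establish the ``all ones'' estimate
\[
\mathcal{L}(\ell_{s},E;\mathbb{K})\subseteq\Pi_{(p;1,1)}(\ell_{s_{1}},E;\mathbb{K}),\qquad \frac1p:=\frac1a+\frac1r
\]
(the left-hand side understood, as before, as the space of restrictions to $\ell_{s_{1}}\times E$), and then to transfer it to the stated exponents via Lemma \ref{cotipo} applied with $n=2$, $k=1$, $E_{1}=\ell_{s_{1}}$, $F=\mathbb{K}$ and $p_{2}=1$. Since $\ell_{s_{1}}$ has cotype $c_{1}=\max\{s_{1},2\}$, we are in the case $c_{1}=s_{1}>2$ in item (i) and in the case $c_{1}=2$ in item (ii), so item (i) follows from Lemma \ref{cotipo}(i) and item (ii) from Lemma \ref{cotipo}(ii); in both situations the constraint $\sum_{i=1}^{k}\frac1{q_{i}}-\frac1q=k-\frac1p$ of that lemma reads $\frac1{q_{1}}-\frac1q=1-\frac1p=1-\frac1a-\frac1r$, which is precisely the relation between $q$, $q_{1}$, $a$ and $r$ imposed in the statement, and the admissible ranges $1\le q_{1}<s_{1}'=c_{1}'$ (resp.\ $1\le q_{1}\le2$) are exactly the ones in the lemma; the hypothesis $p\le q$ needed to invoke Lemma \ref{cotipo} is automatic, because $q_{1}\ge1$ forces $\frac1q=\frac1{q_{1}}-1+\frac1p\le\frac1p$. (In fact the plain inclusion $\Pi_{(p;1,1)}\subseteq\Pi_{(q;q_{1},1)}$ for these parameters already follows from the Inclusion Theorem \ref{inclusiontheorem}, but Lemma \ref{cotipo} is the natural reference and explains why the ranges of $q_{1}$ are what they are.)

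For the core estimate I would fix $A\in\mathcal{L}(\ell_{s},E;\mathbb{K})$ and finite families $(x_{j})_{j}\subseteq\ell_{s_{1}}$, $(y_{j})_{j}\subseteq E$. By the quoted theorem of Carl and Bennett, $Id\colon\ell_{s_{1}}\to\ell_{s}$ is $(a,1)$-summing, hence
\[
\Big(\sum_{j}\|x_{j}\|_{\ell_{s}}^{a}\Big)^{1/a}\le C_{1}\,\|(x_{j})_{j}\|_{\ell_{1}^{w}(\ell_{s_{1}})}.
\]
On the other hand, since $E$ has cotype $r$, the definition of cotype (with the $L^{2}$-norm) together with the pointwise bound $\|\sum_{j}r_{j}(t)y_{j}\|_{E}=\sup_{\varphi\in B_{E'}}|\sum_{j}r_{j}(t)\varphi(y_{j})|\le\|(y_{j})_{j}\|_{\ell_{1}^{w}(E)}$ give
\[
\Big(\sum_{j}\|y_{j}\|_{E}^{r}\Big)^{1/r}\le C_{2}\Big(\int_{0}^{1}\Big\|\sum_{j}r_{j}(t)y_{j}\Big\|_{E}^{2}dt\Big)^{1/2}\le C_{2}\,\|(y_{j})_{j}\|_{\ell_{1}^{w}(E)}.
\]
Combining $|A(x_{j},y_{j})|\le\|A\|\,\|x_{j}\|_{\ell_{s}}\,\|y_{j}\|_{E}$ with Hölder's inequality for the conjugate exponents $a/p$ and $r/p$ (legitimate since $\frac1a+\frac1r=\frac1p$) yields
\[
\Big(\sum_{j}|A(x_{j},y_{j})|^{p}\Big)^{1/p}\le\|A\|\,\Big(\sum_{j}\|x_{j}\|_{\ell_{s}}^{a}\Big)^{1/a}\Big(\sum_{j}\|y_{j}\|_{E}^{r}\Big)^{1/r}\le C_{1}C_{2}\,\|A\|\,\|(x_{j})_{j}\|_{\ell_{1}^{w}(\ell_{s_{1}})}\,\|(y_{j})_{j}\|_{\ell_{1}^{w}(E)},
\]
which is exactly the $(p;1,1)$-summing inequality for the restriction of $A$ to $\ell_{s_{1}}\times E$, completing the reduction.

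I do not foresee a genuine obstacle here: the proof just chains the Carl--Bennett estimate (which we may quote), the elementary cotype inclusion $\ell_{1}^{w}(E)\hookrightarrow\ell_{r}(E)$, Hölder's inequality, and Lemma \ref{cotipo}. The only delicate part is the parameter bookkeeping: one must check that the exponent $p$ given by $1/p=1/a+1/r$ satisfies $p\le a$ and $p\le r$ (so Hölder applies) and $p\le q$ (so Lemma \ref{cotipo} applies), all of which are immediate, and that the dichotomy $s_{1}>2$ versus $s_{1}\le2$ matches respectively the hypotheses $c_{1}>2$ and $c_{1}=2$ of Lemma \ref{cotipo}(i) and (ii), with $q_{1}$ ranging in $[1,s_{1}')$ and in $[1,2]$ accordingly.
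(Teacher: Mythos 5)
Your proof is correct and takes essentially the same route as the paper: the Carl--Bennett estimate gives $(x_j)_j\in\ell_a(\ell_s)$, the cotype inclusion $\ell_1^w(E)\hookrightarrow\ell_r(E)$ gives $(y_j)_j\in\ell_r(E)$, H\"older yields the $(p;1,1)$-summing statement with $1/p=1/a+1/r$, and Lemma \ref{cotipo} converts it to the stated exponents. The only difference is that you spell out the H\"older/cotype details that the paper compresses into ``this obviously leads to,'' and your parenthetical observation that the needed direction of Lemma \ref{cotipo} is just the Inclusion Theorem is accurate.
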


\begin{proof} Let $(x_j)_{j=1}^\infty\in \ell_1^w(\ell_{s_1})$ and $(y_j)_{j=1}^\infty\in \ell_1^w(E)$. Then $(x_j)_{j=1}^\infty\in \ell_a(\ell_{s})$ for $a$ such that $1/a=1/s_1-(1/s -1/2)^+$, and,  as $E$ has cotype $r$,
$(y_j)_{j=1}^\infty\in \ell_r(E)$. This obviously leads to $(A(x_j,y_j))_{j=1}^\infty\in \ell_p,$ for $1/p=1/a+1/r$.
Consequently, $A\in \Pi_{(p;1,1)}(\ell_{s_1},E;\mathbb K)$. Since $\ell_{s_1}$ has the cotype $c=\max\{s_1,2\}$, then $A\in \Pi_{(q;q_1,1)}(\ell_{s_1},E;\mathbb K)$, where $1\le q_1<c'$ if $c=s_1>2$ or $1\leq q_1\le c$ if $c=2$, and
$1/q_1-1/q=1- 1/p$, by Lemma \ref{cotipo}.
\end{proof}

Let us now show that, using an interpolation argument, we are able to improve the result above and to get quite close to (\ref{eqnao}):

\begin{theorem}\label{t2}
Let $E$ be a Banach space such that $E'$ has type 2
  and $1<  p<\infty$.  Then  $ \mathcal{L}(\ell_{p},E;\mathbb{K})\subseteq \Pi_{(r;2,1)}(\ell_{p_1},E%
;\mathbb{K})
$
for  any $1\le p_1<p$  and $1/r=1/2 +   (1/p_1-1/p)p'/2$.

 \end{theorem}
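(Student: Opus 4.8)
The plan is to obtain the conclusion by interpolating between two endpoint inclusions, one of which comes from Proposition~\ref{lp1} and the other from the trivial (Defant--Voigt) case. First I would fix the endpoints. At one end, since $E'$ has type $2$, Proposition~\ref{lp1} gives
\[
\mathcal{L}(\ell_{1},E;\mathbb{K})=\Pi_{(1;2,1)}(\ell_{1},E;\mathbb{K}),
\]
so for $p_1=1$ every $A\in\mathcal{L}(\ell_p,E;\mathbb{K})$ restricts to a $(1;2,1)$-summing form on $\ell_1\times E$; note $1/r=1/2+(1/1-1/p)p'/2 = 1/2 + (1-1/p)p'/2 = 1/2 + 1/2 = 1$ when $p_1=1$, consistent with $r=1$. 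At the other end, the relevant behaviour is the ``$p_1=p$'' limit, where $1/r=1/2$, i.e.\ $r=2$: here one wants $\mathcal{L}(\ell_p,E;\mathbb{K})\subseteq\Pi_{(2;2,1)}(\ell_p,E;\mathbb{K})$, which follows from the Defant--Voigt Theorem ($A$ is $(1;1,1)$-summing) together with the Inclusion Theorem (since $2\le 2$, $2\ge 1$, $1\ge 1$ and $\tfrac12+1-2\le 0 = \tfrac12+1-\tfrac52$... more carefully one checks $\tfrac1{1}+\tfrac1{1}-\tfrac11=1$ vs $\tfrac12+\tfrac11-\tfrac12 =1$, so the inclusion $\Pi_{(1;1,1)}\subseteq\Pi_{(2;2,1)}$ applies). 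So the two endpoints are $(\ell_1,r=1)$ and $(\ell_p,r=2)$.

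Next I would set up the interpolation. For $0\le\theta\le 1$ define $p_1$ by $\tfrac1{p_1}=\tfrac{1-\theta}{1}+\tfrac{\theta}{p}=1-\theta+\tfrac{\theta}{p}$, so that $\ell_{p_1}=[\ell_1,\ell_p]_{\theta}$ and $1\le p_1<p$ corresponds to $0\le\theta<1$; and define $r$ by $\tfrac1r=\tfrac{1-\theta}{1}+\tfrac{\theta}{2}=1-\tfrac{\theta}{2}$. The claimed index is $\tfrac1r=\tfrac12+(\tfrac1{p_1}-\tfrac1p)\tfrac{p'}{2}$; I would verify this is the same $r$: from $\tfrac1{p_1}-\tfrac1p=(1-\theta)(1-\tfrac1p)=(1-\theta)/p'$, multiplying by $p'/2$ gives $(1-\theta)/2$, and $\tfrac12+(1-\theta)/2=1-\tfrac{\theta}{2}$, as needed. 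Then I would apply a vector-valued / bilinear complex interpolation theorem for absolutely summing operators: the map $A$ induces, for each fixed unit vector argument, a linear operator, and the summing inequality
\[
\Big\|(A(x_j,y_j))_j\Big\|_{r}\le C\,\|(x_j)\|_{\ell^w_{2}(\ell_{p_1})}\,\|(y_j)\|_{\ell^w_1(E)}
\]
is an estimate between $\ell^w_2$-type and $\ell_r$-type sequence spaces that interpolates. Concretely, I would phrase the endpoint statements as boundedness of $\widehat A\colon \ell^w_2(\ell_{p_\iota})\times\ell^w_1(E)\to\ell_{r_\iota}$ ($\iota=0,1$) and invoke bilinear complex interpolation (Calder\'on), using that $[\ell_{r_0},\ell_{r_1}]_\theta=\ell_r$ and that the interpolation space of $\ell^w_2(\ell_{p_0})$ and $\ell^w_2(\ell_{p_1})$ contains $\ell^w_2(\ell_{p_1})$ with the right norm (the second slot $\ell^w_1(E)$ is held fixed, so it interpolates to itself).

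The main obstacle I anticipate is the interpolation step for the weak $\ell_p^w$-spaces: $\ell_2^w(\ell_{p_\theta})$ is not literally the complex interpolation space of $\ell_2^w(\ell_1)$ and $\ell_2^w(\ell_p)$, so one cannot blindly quote Calder\'on. The standard remedy, which I would use, is to bring the estimates to a common multilinear-functional form and interpolate the \emph{finite-dimensional} inequalities with uniform constants: fix finite families, view the left side as an $\ell_r$-norm of a vector depending bilinearly and analytically on the data, dominate the $\ell^w_2(\ell_{p_1})$-norm of a finite family $(x_j)$ by controlling $\sup_{\|\varphi\|\le 1}\|(\varphi(x_j))_j\|_2$ through the isometric embedding $\ell_{p_1}\hookrightarrow [\ell_1,\ell_p]_\theta$ and Stein's interpolation applied to the analytic family of bilinear forms $A_z$ obtained by complex-interpolating the $x_j$'s coordinatewise (this is exactly the device used for $\ell_p$-valued summing results). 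Once the uniform finite-dimensional bound is in hand, passing to the supremum over finite families gives $\widehat A\colon\ell^w_2(\ell_{p_1})\times\ell^w_1(E)\to\ell_r$ bounded, i.e.\ $A\in\Pi_{(r;2,1)}(\ell_{p_1},E;\mathbb{K})$, which is the assertion. I would also note at the end the consistency check that at $p_1\to p$ we recover $r=2$ and the bound degenerates to the Defant--Voigt/Inclusion endpoint, and at $p_1=1$ we recover exactly Proposition~\ref{lp1}.
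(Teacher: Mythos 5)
Your overall strategy --- interpolating between the endpoint $p_1=1$, $r=1$ (Proposition \ref{lp1}) and the endpoint $p_1=p$, $r=2$ (Defant--Voigt plus the Inclusion Theorem) --- is exactly the paper's, and your index arithmetic at both endpoints and for general $\theta$ is correct. The paper also performs essentially the reduction you hint at when you say the second slot is ``held fixed'': it fixes $(y_j)_{j=1}^\infty\in\ell_1^w(E)$ and interpolates the resulting \emph{linear} operators $T^{(1)}\colon\ell_2^w(\ell_1)\to\ell_1$ and $T^{(2)}\colon\ell_2^w(\ell_p)\to\ell_2$, so no bilinear interpolation theorem is ever invoked.

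The gap is in your treatment of the obstacle you yourself flag, namely that $\ell_2^w(\ell_{p_1})$ is not visibly an interpolation space between $\ell_2^w(\ell_1)$ and $\ell_2^w(\ell_p)$. Your proposed remedy --- interpolating the vectors $x_j$ coordinatewise via the embedding $\ell_{p_1}\hookrightarrow[\ell_1,\ell_p]_\theta$ and then running Stein's interpolation --- does not close it: choosing for each $j$ an analytic function $F_j$ with $F_j(\theta)=x_j$ and with $\|F_j(it)\|_{\ell_p}$ and $\|F_j(1+it)\|_{\ell_1}$ controlled gives no control on the \emph{joint} quantities $\|(F_j(it))_j\|_{\ell_2^w(\ell_p)}$ and $\|(F_j(1+it))_j\|_{\ell_2^w(\ell_1)}$, which is what the two endpoint estimates require; the weak-$\ell_2$ norm is a condition on the family as a whole, not on its members one at a time. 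The ingredient actually needed is the embedding $\ell_{2}^{w}(\ell_{p_1})\subseteq(\ell_{2}^{w}(\ell_{p}),\ell_{2}^{w}(\ell_{1}))_{\theta}$ for $\frac{1}{p_1}-\frac{1}{p}=\frac{\theta}{p'}$, which the paper imports from \cite{Pisier} via the identification $\ell_{2}^{w}(\ell_{t})=\mathcal{L}(\ell_{2};\ell_{t})$ and interpolation of these spaces of operators. That is a genuine external theorem, not a routine device, and without it (or an equivalent substitute) your argument does not go through.
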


\begin{proof}
Again we treat only the complex case; the real case follows from a complexification argument (see \cite{Junek-preprint, michels, Thesis} for
details). Let $A \in \mathcal{L}(\ell_{p},E;\mathbb{K})$. The case $p_1=1$ follows from  Proposition \ref{lp1}. Hence $A\in\Pi_{(1;2,1)}(\ell_{1},E;\mathbb{K})$. On the other
hand, from  the Defant-Voigt theorem and the inclusion theorem we know that $\mathcal{L}(\ell_{p}%
,E;\mathbb{K})=\Pi_{(2;2,1)}(\ell_{p},E;\mathbb{K})$.
Let now $1< p_1< p$. Fix $(y_{j})_{j=1}^\infty\in\ell_{1}^{w}(E)$ and consider  the mappings%
\[
T^{(2)}\colon\ell_{2}^{w}(\ell_{p})\longrightarrow\ell_{2}\text{~ and~ }%
T^{(1)}\colon\ell_{2}^{w}(\ell_{1})\longrightarrow\ell_{1}
\]
given by
\[
T^{(k)}((x_{j})_{j=1}^\infty)=(A(x_{j},y_{j}))_{j=1}^\infty, ~k=1,2.
\]
Since $A\in \Pi_{(1;2,1)}(\ell_{1},E;\mathbb{K})\cap
 \Pi_{(2;2,1)}(\ell_{p},E;\mathbb{K})$, $T^{(1)}$ and $T^{(2)}$ are well-defined, linear and continuous. Using the fact that
$\ell_{2}^{w}(\ell_{t})=\mathcal{L}(\ell_{2};\ell_{t})$ for $t=1,p$, it follows that
\[
\ell_{2}^{w}(\ell_{p_1})\subseteq(\ell_{2}^{w}(\ell_{p}),\ell_{2}^{w}(\ell
_{1}))_{\theta}%
\]
for $\frac{1}{p_1}-\frac{1}{p}=\frac{\theta}{p'}$ (see \cite[proof of the
Theorem]{Pisier}). So the complex interpolation method implies that, for $\frac{1}{r}=\frac{1-\theta}{2}+\frac{\theta}{1}$, %
$$T  \colon\ell_{2}^{w}(\ell_{p_1})\longrightarrow\ell_{r}~,~T((x_{j})_{j=1}^\infty) =(A(x_{j},y_{j}))_{j=1}^\infty,%
$$
is continuous. It follows that $A\in\Pi_{(r;2,1)}(\ell_{p_1},E;\mathbb{K}%
)$. Now observe that
$$\frac{1}{r}-\frac{1}{2}=\frac{\theta}{2}= \frac{p'}{2}(\frac{1}{p_1}-\frac{1}{p}).$$

\end{proof}

\medskip

We finish the paper with another combination of our previous results with an argument of complex interpolation. Alternatively, this last result can also be obtained using
results from \cite{Port} and the idea of the proof of Theorem \ref{t2}.

\begin{proposition}
Let $n\geq2$ and $1<p\leq2.$ Then
\[
\mathcal{L}(\ell_{1},\overset{n-1}{\ldots},\ell_{1},\ell_{p};\mathbb{K})=
\Pi_{(r_{n};r_{n},\ldots,r_{n})}(\ell_{1},\overset{n-1}{\ldots},\ell_{1}%
,\ell_{p};\mathbb{K}),
\]
for every $1\leq r_{n}\leq\frac{2^{n-1}}{2^{n-1}-1}$.
\end{proposition}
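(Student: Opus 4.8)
The strategy is an induction on $n$ combined with the complex interpolation argument used in Theorem~\ref{t2}. The base case $n=2$ is precisely Proposition~\ref{lp1}: since $\ell_p'=\ell_{p'}$ has type $2$ for $1<p\le 2$, we get $\mathcal{L}(\ell_1,\ell_p;\mathbb{K})=\Pi_{(r;2,r)}(\ell_1,\ell_p;\mathbb{K})$ for $1\le r\le 2$, and $\tfrac{2^{1}}{2^{1}-1}=2$, so this matches the asserted exponent range $1\le r_2\le 2$ (the exponent $2$ in the second slot only helps, by the Inclusion Theorem). For the inductive step I would fix $A\in\mathcal{L}(\ell_1,\overset{n-1}{\ldots},\ell_1,\ell_p;\mathbb{K})$ and look at the associated linear-in-one-variable map, freezing the last $n-1$ variables to obtain an $(n-1)$-linear form valued in a dual space, or, more in the spirit of Theorem~\ref{t2}, freeze all but the first variable and all but one of the others so as to reduce to a linear interpolation between two endpoint estimates.

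More concretely, the plan is: first use the Defant--Voigt Theorem together with the Inclusion Theorem to get the ``upper'' endpoint $\mathcal{L}(\ell_1,\overset{n-1}{\ldots},\ell_1,\ell_p;\mathbb{K})=\Pi_{(1;1,\ldots,1)}(\ldots)$ — this already handles $r_n=1$ — and then produce a ``better'' endpoint from Theorem~\ref{as} or Proposition~\ref{lp1} applied to a suitable associated multilinear map, giving an estimate of the form $\widehat A\colon \ell_2^w(\ell_1)\times\cdots\times\ell_2^w(\ell_1)\times(\text{something for }\ell_p)\to\ell_1$. Interpolating these two endpoints — fixing all but finitely many vector variables and using $\ell_2^w(\ell_t)=\mathcal{L}(\ell_2;\ell_t)$ as in the Pisier-type identity quoted in the proof of Theorem~\ref{t2} — would upgrade the diagonal summing exponent from $1$ to $r_n$, and the claimed value $\tfrac{2^{n-1}}{2^{n-1}-1}$ should emerge as the harmonic-type combination $\tfrac1{r_n}=\tfrac{1-\theta}{r_{n-1}}+\tfrac{\theta}{1}$ coming from an $(n-1)$-fold iteration (each step halving the ``gap'' $\tfrac1{r}-1$, hence the powers of $2$). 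The Inclusion Theorem is then invoked once more to reduce all the $p_i$'s in the right-hand side back down to $r_n$, since $r_n\le 2$ and $r_n\le r_n$ trivially satisfy the index condition.

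The main obstacle I anticipate is bookkeeping the exponents through the iteration: one must choose at each stage the correct interpolation parameter $\theta$ and verify that the resulting exponent in the next stage is still $\le 2$ (so that the cotype-$2$ machinery of $\ell_1$, and the type-$2$ hypothesis on $\ell_p'$, keep applying) and that the index inequalities of the Inclusion Theorem remain valid. A second, more delicate point is making sure the interpolation is performed on the right object: I would fix $(y^i_j)_j\in\ell_1^w(\ell_1)$ for $i=2,\ldots,n-1$ (or for all but two coordinates) and interpolate only in the remaining two, so that the argument genuinely reduces to the bilinear interpolation already carried out in Theorem~\ref{t2}; iterating over which pair of coordinates is ``active'' is what generates the $2^{n-1}$. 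The alternative route mentioned in the statement — using the Khinchin-type inequalities and multilinear machinery of \cite{Port} directly, in the style of the proof of Theorem~\ref{t2} — would bypass part of this induction, and I would keep it in reserve in case the exponent arithmetic in the purely inductive approach does not close cleanly.
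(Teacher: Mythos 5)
Your skeleton (induction on $n$, base case from Proposition~\ref{lp1}, a complex-interpolation step at each stage, and the recursion $\tfrac{1}{r_n}=\tfrac{1}{2}\cdot\tfrac{1}{r_{n-1}}+\tfrac{1}{2}$ producing $r_n=\tfrac{2^{n-1}}{2^{n-1}-1}$) matches the paper, and your exponent arithmetic is correct. However, there are two genuine gaps. First, the endpoints you propose to interpolate are not available as stated. Your ``better'' endpoint $\widehat A\colon \ell_2^w(\ell_1)\times\cdots\times\ell_2^w(\ell_1)\times(\cdot)\to\ell_1$ would, via Theorem~\ref{as} with $k=n-1$, require $A_{n-1}\in\Pi_{as}(\ell_1,\ldots,\ell_1;\ell_{p'})$; the paper only ever feeds Theorem~\ref{as} the \emph{linear} case $k=1$, where \cite[Theorem 12.10]{Diestel} applies, and no multilinear analogue is established (indeed, if that endpoint held, Lemma~\ref{cotipo} would push the conclusion to all $r\le 2$, far beyond what the proposition claims — a strong hint that it is false). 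The endpoints the paper actually uses are obtained by combining the two cases $r=1$ and $r=2$ of Proposition~\ref{lp1} with \cite[Corollary 3.2]{Port} to insert the extra $\ell_1$ variables, yielding for $n=3$ the two \emph{staggered} estimates $\widehat A\colon\ell_2^u(\ell_1)\times\ell_1^u(\ell_1)\times\ell_1^u(\ell_p)\to\ell_1$ and $\widehat A\colon\ell_1^u(\ell_1)\times\ell_2^u(\ell_1)\times\ell_2^u(\ell_p)\to\ell_2$. Note in particular that the targets differ ($\ell_1$ versus $\ell_2$); your two proposed endpoints both land in $\ell_1$, so interpolating them could never improve the target exponent, which is where the gain $1\to 4/3\to 8/7\to\cdots$ actually comes from.

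Second, the interpolation itself cannot be reduced to the frozen-variable linear interpolation of Theorem~\ref{t2}. In the correct pair of endpoints \emph{every} coordinate's sequence space changes ($\ell_2^u\leftrightarrow\ell_1^u$ in each slot), so there is no variable you can freeze in a space common to both estimates; one needs a genuinely multilinear complex interpolation formula for the couples $(\ell_2^u(\ell_1),\ell_1^u(\ell_1))_\theta$ and its companions, which is supplied by \cite{DM} (building on \cite{Kouba}) and is not a consequence of the Pisier-type identity $\ell_2^w(\ell_t)=\mathcal{L}(\ell_2;\ell_t)$ used in Theorem~\ref{t2}. A smaller but real issue: your final appeal to the Inclusion Theorem to ``reduce all the $p_i$'s back down to $r_n$'' goes in the wrong direction — Theorem~\ref{inclusiontheorem} only allows the indices $p_j$ to \emph{increase} — and likewise the full range $1\le r_n\le\frac{2^{n-1}}{2^{n-1}-1}$ does not follow from the extreme case by inclusion (the index condition forces $p=q$ for diagonal tuples); the paper invokes \cite[Theorem 3 and Remark 2]{Junek} for this reduction.
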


\begin{proof}  The case $n=2$ is proved in  Proposition \ref{lp1}. From
\cite[Theorem 3 and Remark 2]{Junek} it suffices to prove the result for $r_{n}=\frac{2^{n-1}}{2^{n-1}-1}.$\\
\indent Case $n=3$ and $\mathbb{K}=\mathbb{C}$: Let
$A\in\mathcal{L}(\ell_{1},\ell _{1},\ell_{p};\mathbb{C}).$ From
 Proposition \ref{lp1} we know that
\begin{equation*}
\mathcal{L}(\ell_{1},\ell_{p};\mathbb{C})=\Pi_{(1;2,1)}(\ell_{1},\ell_{p};\mathbb{C}). \label{31Dez}%
\end{equation*}
Combining this with \cite[Corollary 3.2]{Port} we get
\[
\mathcal{L}(\ell_{1},\ell_{1},\ell_{p};\mathbb{C})=\Pi_{(1;2,1,1)}(\ell_{1},\ell_{1}%
,\ell_{p};\mathbb{C})=\Pi_{(1;1,2,1)}(\ell_{1},\ell_{1},\ell_{p};\mathbb{C}).
\]
So,%
\begin{equation}
\widehat{A}\colon\ell_{2}^{u}(\ell_{1})\times\ell_{1}^{u}(\ell_{1})\times
\ell_{1}^{u}(\ell_{p})\longrightarrow\ell_{1}~ \label{DD1}%
\end{equation}
is bounded. Combining now Proposition \ref{lp1}(ii) with
\cite[Corollary 3.2]{Port} we conclude that%
\begin{equation}
\widehat{A}\colon\ell_{1}^{u}(\ell_{1})\times\ell_{2}^{u}(\ell_{1})\times
\ell_{2}^{u}(\ell_{p})\longrightarrow\ell_{2} \label{DD2}%
\end{equation}
is bounded. So, using complex interpolation for (\ref{DD1}) and
(\ref{DD2}) we obtain that%
\[
\widehat{A}\colon\ell_{4/3}^{u}(\ell_{1})\times\ell_{4/3}^{u}(\ell_{1}%
)\times\ell_{4/3}^{u}(\ell_{p})\longrightarrow\ell_{4/3}%
\]
is bounded (this use of interpolation is based on results of
\cite{DM}, which are closely related to the classical paper
\cite{Kouba} -- further details can be found in \cite{Junek}). \\
\indent Case $n=4$ and $\mathbb{K}=\mathbb{C}$: From the case $n=3$
and \cite[Corollary 3.2]{Port} we know that
\begin{equation*}
\mathcal{L}(\ell_{1},\ell_{1},\ell_{1},\ell_{p};\mathbb{C})=\Pi_{(\frac{4}{3};1,\frac
{4}{3},\frac{4}{3},\frac{4}{3})}(\ell_{1},\ell_{1},\ell_{1},\ell_{p};\mathbb{C}).
\label{31Dez2}%
\end{equation*}
Since $\frac{4}{3} < 2$, Proposition \ref{lp1}(i) gives that
$\mathcal{L}(\ell
_{1},\ell_{p};\mathbb{C})=\Pi_{(1;\frac{4}{3},1)}(\ell_{1},\ell_{p};\mathbb{C})$.
So \cite[Corollary 3.2]{Port} implies
\[
\mathcal{L}(\ell_{1},\ell_{1},\ell_{1},\ell_{p};\mathbb{C})=\Pi_{(1;\frac{4}{3}%
,1,1,1)}(\ell_{1},\ell_{1},\ell_{1},\ell_{p};\mathbb{C}).
\]
Hence%
$$\widehat{A}    \colon\ell_{1}^{u}(\ell_{1})\times\ell_{\frac{4}{3}}^{u}%
(\ell_{1})\times\ell_{\frac{4}{3}}^{u}(\ell_{1})\times\ell_{\frac{4}{3}}%
^{u}(\ell_{p})\longrightarrow\ell_{\frac{4}{3}}~\mathrm{~and}$$
$$\widehat{A}    \colon\ell_{\frac{4}{3}}^{u}(\ell_{1})\times\ell_{1}^{u}%
(\ell_{1})\times\ell_{1}^{u}(\ell_{1})\times\ell_{1}^{u}(\ell_{p}%
)\longrightarrow\ell_{1}$$
are bounded. Using complex interpolation once more we conclude that%
\[
\widehat{A}\colon\ell_{\frac{8}{7}}^{u}(\ell_{1})\times\ell_{\frac{8}{7}}%
^{u}(\ell_{1})\times\ell_{\frac{8}{7}}^{u}(\ell_{1})\times\ell_{\frac{8}{7}%
}^{u}(\ell_{p})\longrightarrow\ell_{\frac{8}{7}}%
\]
is bounded as well. The cases $n>4$ are similar and the real case
follows, again, by complexification.
\end{proof}

\vspace{2mm}

\noindent[Oscar Blasco] Departamento de An\'alisis Matem\'atico, Universidad
de Valencia, 46.100 Burjasot - Valencia, Spain, e-mail: oscar.blasco@uv.es

\medskip

\noindent[Geraldo Botelho] Faculdade de Matem\'atica, Universidade Federal de
Uberl\^andia, 38.400-902 - Uberl\^andia, Brazil, e-mail: botelho@ufu.br

\medskip

\noindent[Daniel Pellegrino] Departamento de Matem\'atica, Universidade
Federal da Para\'iba, 58.051-900 - Jo\~ao Pessoa, Brazil, e-mail: dmpellegrino@gmail.com

\medskip

\noindent[Pilar Rueda] Departamento de An\'alisis Matem\'atico, Universidad de
Valencia, 46.100 Burjasot - Valencia, Spain, e-mail: pilar.rueda@uv.es

\end{document}